\theoremstyle{plain}
\newtheorem{theorem}{Theorem}[section]
\newtheorem{lemma}[theorem]{Lemma}
\newtheorem{corollary}[theorem]{Corollary}
\newtheorem{proposition}[theorem]{Proposition}
\theoremstyle{definition}
\newtheorem{definition}[theorem]{Definition}
\newtheorem{example}[theorem]{Example}
\newtheorem{remark}[theorem]{Remark}
\numberwithin{equation}{section}
\def\hua{\mathcal}
\def\kong{\mathbb}
\def\<{\langle}
\def\>{\rangle}
\def\Aut{\operatorname{Aut}}
\def\Ind{\operatorname{Ind}}
\def\Sim{\operatorname{Sim}}
\def\Hom{\operatorname{Hom}}
\def\End{\operatorname{End}}
\def\Ext{\operatorname{Ext}}
\def\diff{\operatorname{d}}
\def\Br{\operatorname{Br}}
\def\Re{\operatorname{Re}}
\newcommand{\h}{\operatorname{\hua{H}}}            
\renewcommand{\k}{\mathbf{k}}
\renewcommand{\mod}{\operatorname{mod}}
\newcommand{\tilt}[3]{{#1}^{#2}_{#3}}
\newcommand{\Cone}{\operatorname{Cone}}
\def\K{\operatorname{K}}
\def\numbers
\def\ends
\newcommand{\EG}{\operatorname{EG}}       
\newcommand{\D}{\operatorname{\hua{D}}}
\newcommand{\qq}{\Gamma Q}
\renewcommand{\ss}{\Gamma\kong{S}}
\newcommand{\cc}[1]{\overline{#1}}
\renewcommand{\i}{\mathbf{i}}
\renewcommand{\j}{\mathbf{j}}
\renewcommand{\a}{\mathbf{a}}
\renewcommand{\S}{\mathbf{\kong{S}}}
\newcommand{\id}{\operatorname{id}}
\newcommand{\F}{\kong{F}}
\newcommand{\cub}{\operatorname{U}}
\def\gq{\hua{G}_Q}
\def\gs{\hua{G}_\kong{S}}
\def\hq{\hua{H}_Q}
\def\hs{\h_{\S}}
\def\Stab{\operatorname{Stab}}
\newcommand\NS{\operatorname{NStab}}
\newcommand\FS{\operatorname{FStab}}
\newcommand\NSp{\NS^\circ}
\newcommand\FStap{\FS^\circ}
\def\Stap{\operatorname{Stab}^\circ}
\def\Frob{F_Q^\sigma}
\def\v{\varsigma}
\newcommand{\imm}{\hua{L}}
\newcommand{\hh}{\widetilde{\h}}
\def\gldim{\operatorname{gldim}}
\newcommand{\qihao}{\fontsize{7.25pt}{\baselineskip}\selectfont}
\newcommand{\bahao}{\fontsize{6.25pt}{\baselineskip}\selectfont}
\title{Frobenius morphisms and stability conditions}
\author{Wen Chang}
\thanks{This work is supported by Beijing Natural Science Foundation (Grant No. Z180003), National Key R\&D Program of China (NO. 2020YFA0713000) and by Natural Science Foundation of China (Grant No. 11601295).}
\address{Wen Chang:
School of Mathematics and Statistics,
Shaanxi Normal University,
Xi'an 710062, China.}
\email{changwen161@163.com}
\author{Yu Qiu}
\address{Yu Qiu:
	Yau Mathematical Sciences Center and Department of Mathematical Sciences,
	Tsinghua University,
    100084 Beijing,
    China.
    \&
    Beijing Institute of Mathematical Sciences and Applications, Yanqi Lake, Beijing, China}
\email{yu.qiu@bath.edu}
\begin{document}

\subjclass[2010]{16E35, 
14F08 
}
\begin{abstract}
{\small

We generalize Deng-Du's folding argument,
for the bounded derived category $\D(Q)$ of an acyclic quiver $Q$,
to the finite dimensional derived category $\D(\qq)$ of the Ginzburg algebra $\qq$ associated to $Q$.
We show that the $F$-stable category of $\D(\qq)$ is equivalent to
the finite dimensional derived category $\D(\ss)$ of the Ginzburg algebra $\ss$
associated to the species $\S$, which is folded from $Q$.

If $(Q,\S)$ is of Dynkin type, we prove that
the space $\Stab\D(\S)$ of the stability conditions on $\D(\S)$ is canonically isomorphic to the space $\FS\D(Q)$ of $F$-stable stability conditions on $\D(Q)$. For the case of Ginzburg algebras, we also prove a similar isomorphism between principal components $\Stap\D(\ss)$ and $\FStap\D(\qq)$).

There are two applications.
One is for the space $\NS\D(\qq)$ of numerical stability conditions in $\Stap\D(\qq)$.
We show that $\NS\D(\qq)$ consists of $\Br Q/\Br \S$ many connected components, each of which is isomorphic to $\Stap\D(\ss)$,
for $(Q,\S)$ is of type $(A_3, B_2)$ or $(D_4, G_2)$.
The other is that we relate the $F$-stable stability conditions to the Gepner type stability conditions.

\vskip .3cm
{\parindent =0pt
\it Key words:} Folding; Frobenius morphism; Calabi-Yau category; Numerical stability condition; Gepner equation
}\end{abstract}

\maketitle

\setcounter{tocdepth}{1}

\tableofcontents

\section*{Introduction}
Bridgeland \cite{B1} introduced the notion of a stability condition on a triangulated category,
aiming to understand D-branes in string theory from a mathematical point of view.
We study the spaces of stability conditions arising from representation theory,
i.e. the categories associated to quivers and species.

One of our main techniques is folding, which is well-known in studying non-simply laced Dynkin diagrams.
In particular, folding the bounded derived category $\D(Q)$ of a quiver $Q$
was studied by Deng-Du \cite{DD1,DD2}.
The key observation is that an automorphism on the quiver $Q$ will induce a Frobenius morphism on the path algebra $\k Q$
and a Frobenius functor (which is an auto-equivalence) on the category $\D(Q)$.
Then the $F$-stable category of $\D(Q)$ is derived equivalent to the bounded derived category of a species $\S$,
which is obtained by folding the quiver $Q$.
Our first aim is to generalize this result
to the finite dimensional derived category $\D(\qq)$ of the Ginzburg algebra $\qq$ associated to $Q$, see Proposition~\ref{pp:Theta}.
Namely, the $F$-stable category of $\D(\qq)$ is derived equivalent to
the finite dimensional derived category $\D(\ss)$ of the Ginzburg algebra $\ss$ associated to $\S$.

Then applying the Frobenius functor on the stability conditions, we show in Theorem~\ref{thm:stable1} that, if $(Q,\S)$ is of Dynkin type, the space $\Stab\D(\S)$ of the stability conditions on $\D(\S)$
is canonically isomorphic to the space $\FS\D(Q)$ of $F$-stable stability conditions on $\D(Q)$, which is induced in Definition \ref{def:fstab stability conditions}.
For the case of derived categories of Ginzburg algebras, we have a similar isomorphism $\Stap\D(\ss)\cong\FStap\D(\qq)$ which is stated in Theorem~\ref{cor:stab CY}.

As an application of above results, we show in Theorem~\ref{thm:main} that, if $(Q,\S)$ is of type $(A_3, B_2)$ or $(D_4, G_2)$,
the space $\NS\D(\qq)$ of numerical stability conditions in the space $\Stap\D(\qq)$,
consists of $\Br\qq/\Br\ss$ many connected components,
each of which is isomorphic to $\Stap\D(\ss)$.
As another application, we see that the $F$-stable stability condition of $\D(Q)$ coincides with the stability condition of Gepner type $(F,0)$. When $\S$ is of Dynkin type, we explicitly compute the Gepner type stability condition of $\D(\S)$ with respect to the Auslander-Reiten translation, and show that the minimal value of the global dimension on $\Stab\D(\S)$ is $1-\frac{2}{h}$, where $h$ is the Coxeter number of $\S$.

The original version of this paper was motivated by a chat with Tom Sutherland and Alastair King.
Sutherland studies a list of quivers (known as Painlev\'{e} quivers) in his PhD work \cite{S},
whose corresponding spaces of numerical stability conditions are related to elliptic surface.
Note that all those quivers are foldable except one.
Moreover, the space of the numerical stability conditions for a quiver $Q$ is related to
the cluster algebra, whose type is the corresponding species $\S$ folded from $Q$ (cf. \cite{Q}).
The updated version is motivated by \cite{Q3} where we observe
the interaction between folding and Gepner equations (for stability conditions).

We should also mention that the idea that inducing stability conditions by some well-behaved functor has appeared in \cite{P07,MMS09}, where the authors focused on categories in certain geometric contexts. In the paper we focus on the categories arising from quiver representation theory and heavily use the folding technique on quivers and these categories.

\subsection{Notations and conventions}
Throughout, let $q$ be a prime power, let $\F_q$ be a finite field with $q$ elements and let $\k=\cc{\F_q}$ be the algebraic closure of $\F_q$. Let $\v=\v_q$ ($k\mapsto k^q$) be a field automorphism of $\k$ which is a power of Frobenius automorphism. We will use $\k'$ to denote the field $\k$ or $\F_q$.
We also assume that the categories we consider are all $\k'$-Hom-finite and Krull-Schmidt.

Here are some notations appearing in the paper.
\begin{itemize}
\item $Q=(Q_0,Q_1)$: quiver.
\item $\S=(\S_0,\S_1)$: species.
\item $\hq$: module category of $\k Q$.
\item $\hs$: module category of $\F_q \S$.
\item $\qq$: Ginzburg algebra of $Q$.
\item $\ss$: Ginzburg algebra of $\S$.
\item $\gq$: standard heart of $\D(\qq)$.
\item $\gs$: standard heart of $\D(\ss)$.
\item $\Sim\hua{A}$: the set of simple modules in an abelian category $\hua{A}$.
\item $\Br(\qq):$ the spherical twist group of $\D(\qq)$.
\item $\Br(\ss):$ the spherical twist group of $\D(\ss)$.
\item $\EG \D:$ the  total exchange graph of a triangulated category $\D$.
\item $\EG(\D, \h_0):$ the connected component of $\EG \D$ containing heart $\h_0$.
\item $\EG_3(\D, \h_0):$ the interval connected component of $\EG \D$ containing heart $\h_0$.
\item $\Stab \D:$ the space of stability conditions of a triangulated category $\D$.
\item $\NS\D:$ the space of numerical stability conditions.
\item $\FS\D:$ the space of $F$-stable stability conditions.
\end{itemize}

\subsection*{Acknowledgements}
Wen Chang would like to thank Dong Yang for answering the questions about the derived category and its heart via e-mail, and Xiangdong Yang for explaining some concepts about manifolds.
Qy would like to thank Tom Sutherland for sharing his ideas in his PhD thesis,
Alastair King for teaching him the folding technique,
Bernhard Keller for some explanations via e-mail
and Thomas Br\"{u}stle for proofreading.
Both authors thank reviewer's careful reading and many valuable suggestions.
Qy is supported by
National Key R\&D Program of China (No. 2020YFA0713000),
Beijing Natural Science Foundation (Grant No.Z180003) and
National Natural Science Foundation of China (Grant No.12031007).

\section{Preliminaries}\label{sec:pre}
\subsection{Folding from quivers to species}
\label{subsection:Folding from quivers to species}
Recall that a \emph{quiver} $Q=(Q_0,Q_1)$ is an oriented diagram, with (finite) vertex set $Q_0$ and (finite) arrow set $Q_1$. Denote by $h$ (resp. $t$) the map from $Q_1$ to $Q_0$ which maps an arrow to its head (resp. tail). An \emph{automorphism} $\iota$ of a quiver is a permutation of $Q_0$ such that for any arrow $a$, there is a unique arrow $\iota(a)$ with $h(\iota(a))=\iota(h(a))$ and $t(\iota(a))=\iota(t(a))$.
So $\iota$ is identity if and only if it is an identity on the vertex set $Q_0$. We also assume that $\iota$ is admissible, that is, there are no arrows connecting vertices in the same orbit of $\iota$ in $Q_0$.

Recall that an \emph{$\F_q$-species} $\S=(\S,L_\i,X_\a)$ consists of the following data:
\begin{itemize}
\item
    a quiver $\S=(\S_0,\S_1)$;
\item
    an $\F_q$-division ring $L_\i$ for each vertex $\i \in \S_0$;
\item
    an $L_\i$-$L_\j$-bimodule $X_\a$ for each arrow $\a\colon\i \to \j$ in $\S_1$.
\end{itemize}
Let $L=\oplus_{\i\in\S_0}L_\i$ and  $X=\oplus_{\a\in\S_1}X_\a$. Then $X$ is a natural $L$-$L$-bimodule. The $L$-algebra
$$\F_q \S:=\bigoplus_{n\ge 0}X^{\otimes n}\text{ where } X^{\otimes0}=L,
X^{\otimes n}=\underbrace{X\otimes_L\cdots\otimes_LX}_n$$ is
called the {\it path (or tensor) algebra} of $\S$. Thus,
a tensor $x_n\otimes\cdots\otimes x_1$ (write $x_n
\cdots x_1$ for simplicity) with  $x_i\in X_{\a_i}$
is non-zero implies that $\a_n\cdots \a_1$ is a path in $\S$.

Let $Q$ be an acyclic quiver, that is, there are no cyclic paths consisting of arrows on it.
For an automorphism $\iota$ of $Q$, one may fold it as a species. We recall this from \cite[Section 3, Section 6]{DD1}. Let $\k Q$ be the path algebra associated to $Q$ (note that a quiver is a special species), there is a Frobenius morphism $F=\Frob(q)$ on it given by
\begin{gather}\label{eq:Frob}
    F(\sum_{s} k_s p_s)=\sum_{s}\v(k_s) \iota(p_s),
\end{gather}
where $\sum_{s}k_s p_s$ is a $\k$-linear combination of paths in $\k Q$, and $\iota(p_s)=\iota(a_n)\cdots\iota(a_1)$ if $p_s=a_n\cdots a_1$ in $Q_1$.
Then one can use $F$ to define $\F_q$-species $\S=Q^\iota$ with $L_\i$ and $X_\a$ consisting of $F$-stable (see \cite{DD1} for precise meaning) objects. More precisely,
\begin{itemize}
\item the quiver $\S$ is the $\iota$-orbit of $Q$, i.e. $\S_0=Q_0/\iota$ and $\S_1=Q_1/\iota$;
\item for $\i\in\S_0$, denote by $|\i|$ the number of vertices in the orbit
    and fix $i_0\in\i$,
let
\begin{gather}
    L_\i=(\k Q)_\i^F=\{ \sum_{s=0}^{|\i|-1} \varsigma^s(k) {\iota^s(i_0)}
        \mid k\in \k, \v^{|\i|}(k)=k \},
\end{gather}
where ${\iota^s(i_0)}$ is the idempotent element corresponding to vertex $\iota^s(i_0)$ in $Q_0$ (we abuse notation here);
\item  for $\a\in\S_1$, similarly define $|\a|$ and fix $a_0\in\a$,
let
\begin{gather}
    X_\a=(\k Q)_\a^F=\{ \sum_{s=0}^{|\a|-1} \varsigma^s(k) \iota^s(a_0)
        \mid  k\in \k, \v^{|\a|}(k)=k  \}.
\end{gather}
\end{itemize}
Note that induced by the algebraic structure of $\k Q$, $L_{\i}$ is a $\F_q$-division ring of dimension $|\i|$ with $\sum_{s=0}^{|\i|-1}{\iota^s(i_0)}$ as identity, for any $\i$ in $\S_0$. More precisely, by viewing $\F_{q^{|\i|}}=\F_q(Y)\subseteq \k$ as a field extension over $\F_q$, $L_{\i}$ has a basis
\begin{gather}\label{equ:basis}
\{\sum_{s=0}^{|\i|-1}\varsigma^{s}(Y^j){\iota^s(i_0)}, 0 \leq j \leq |\i|-1\}.
\end{gather}
Similarly, $X_\a$ is a $L_\i$-$L_{\j}$-bimodule of $\F_q$-dimension $|\a|$, for any $\a\colon\mathbf{i} \to \j$ in $\S_1$, with a basis
\begin{gather}
\{\sum_{s=0}^{|\a|-1}\varsigma^{s}(Y^j)\iota^s(a_0), 0 \leq j \leq |\a|-1\}.
\end{gather}
Then $\S$ is an acyclic quiver and $\F_q \S$ is a finite dimensional $\F_q$-hereditary algebra which is isomorphic to the $F$-stable $\F_q$-subalgebra  $(\k Q)^F$ of $\k Q$ (\cite[Section 6]{DD1}).

\begin{example}
\label{ex:Dynkin}
When $Q$ is of Dynkin type,
all possible admissible automorphism $\iota$ and the corresponding species $\S$ are listed as follows (we omit the orientations but they should be compatible with $\iota$):
\numbers
\item
    $Q$ is of type $A_{2n-1}$ and $\S$ is of type $C_n$ while
    $\iota$ exchanges the bullets in the same column.
\[
    A_{2n-1} \quad
    \xymatrix@R=0.1pc@C=0.7pc{
        \ar@/_/@{<->}[dd]_{\iota} & \bullet  \ar@{-}[r]& \bullet \ar@{-}[r]&
            \cdots \ar@{-}[r]& \bullet \ar@{-}[dr]\\
        &&&&& \circ \\
        &\bullet \ar@{-}[r]& \bullet \ar@{-}[r]&
            \cdots \ar@{-}[r]& \bullet \ar@{-}[ur]
    }
    \qquad
    C_n \quad
    \xymatrix@R=0.1pc@C=0.7pc{
        \\
        \bullet \ar@{-}[r]& \bullet \ar@{-}[r]& \cdots \ar@{-}[r]& \bullet \ar@{-}[r]& \circ\\
        _2 & _2 && _2 & _1
    }
\]
\item
    $Q$ is of type $D_{n+1}$ and $\S$ is of type $B_n$ while
    $\iota$ exchanges the bullets.
\[
    D_{n+1} \quad
    \xymatrix@R=0.1pc@C=0.7pc{
        &&&& \bullet &\ar@/^/@{<->}[dd]^{\iota}\\
        \circ \ar@{-}[r]& \circ \ar@{-}[r]& \cdots \ar@{-}[r]& \circ \ar@{-}[dr]\ar@{-}[ur]\\
        &&&& \bullet &\\
    }
    \qquad
    B_n \quad
    \xymatrix@R=0.1pc@C=0.7pc{
        \\
        \circ \ar@{-}[r]& \circ \ar@{-}[r]& \cdots \ar@{-}[r]& \circ \ar@{-}[r]& \bullet\\
        _1 & _1 && _1 & _2
    }
\]

\item
    $Q$ is of type $E_6$ and $\S$ is of type $F_4$ while
    $\iota$ exchanges the bullets in the same column.
\[
    E_6 \quad
    \xymatrix@R=0.1pc@C=0.7pc{
        && \bullet \ar@{-}[r]& \bullet & \ar@/^/@{<->}[dd]^{\iota}\\
        \circ \ar@{-}[r]& \circ \ar@{-}[dr]\ar@{-}[ur]\\
        && \bullet \ar@{-}[r]& \bullet &\\
    }
    \qquad
    F_4 \quad
    \xymatrix@R=0.1pc@C=0.7pc{
        \\
        \circ \ar@{-}[r]& \circ \ar@{-}[r]& \bullet \ar@{-}[r]& \bullet\\
        _1 & _1 & _2 & _2
    }
\]
\item
    $Q$ is of type $D_4$ and $\S$ is of type $G_2$ while
    $\iota$ cyclically permutes the three bullets.
\[
    D_4 \quad
    \xymatrix@R=0.1pc@C=0.7pc{
        & \bullet & \ar@/^/@{<->}[dd]^{\iota}\\
        \circ \ar@{-}[r]\ar@{-}[dr]\ar@{-}[ur]& \bullet\\
        & \bullet &\\
    }
    \qquad
    G_2 \quad
    \xymatrix@R=0.1pc@C=0.7pc{
        \\
        \circ \ar@{-}[r]& \bullet\\
        _1 & _3
    }
\]
\ends
The label on vertex $\i$ in $\S$ are $|\i|$.
\end{example}

\subsection{Ginzburg algebras}
\label{subsection:Ginzburg algebras of quivers and species}
For a quiver $Q$, the \emph{Ginzburg (dg) algebra} (of degree 3) $\qq{}$ is constructed as follows (\cite[Section~7.2]{K10}):
\begin{itemize}
\item   Let $\overline{Q}$ be the graded quiver whose vertex set is $Q_0$
and whose arrows are: the arrows in $Q$ with degree $0$;
an arrow $a^*:j\to i$ with degree $-1$ for each arrow $a:i\to j$ in $Q$;
a loop $i^*:i\to i$ with degree $-2$ for each vertex $i$ in $Q$.
\item   The underlying graded algebra of $\qq{}$ is the completion of
the graded path algebra $\k \overline{Q}$ in the category of graded vector spaces
with respect to the ideal generated by the arrows of $\overline{Q}$.
\item   The differential of $\qq{}$ is the unique continuous linear endomorphism homogeneous
of degree $1$ which satisfies the Leibniz rule and
takes the following values on the arrows of $\overline{Q}$
\begin{gather}
   \diff a=0, \quad \diff (a^*)=0, \quad  \diff (i^*)=i(\sum_{a\in Q_1} \, [a,a^*])i .
\end{gather}
\end{itemize}

For a $\F_q$-species $\S=(\S,L_\i,X_\a)$ obtained by folding a quiver $Q$ in Section \ref{subsection:Folding from quivers to species}, we have the Ginzburg algebra $\ss$ constructed as follows
\begin{itemize}
\item   Let $\cc{\S}$ be the graded species whose vertex set is $\S_0$
(associated with the same division rings)
and whose arrows are: the arrows in $\S_1$ and the same bimodules with degree $0$;
an arrow $\a^*:\j\to \i$ and a $L_\j$-$L_\i$-bimodule $X_\a^*$ (dual over $\F_q$) with degree $-1$,
for each arrow $\a:\i\to \j$ in $\S_1$;
a loop $\i^*:\i\to \i$ with a $L_{\i}$-$L_{\i}$-bimodule
$L_\i^*$ (dual over $\F_q$) with degree $-2$ for each vertex $\i$ in $\S$.
\item   The underlying graded algebra of $\ss$ is the completion of
the graded path algebra $\k \cc{\S}$ in the category of graded vector spaces
with respect to the ideal generated by the arrows of $\cc{\S}$.
\item   The differential of $\ss$ is the unique continuous linear endomorphism homogeneous
of degree $1$ which satisfies the Leibniz rule and
takes the following values
\begin{gather}
    \diff (L_\i)=0, \quad \diff (X_\a)=0, \quad \diff (X_\a^*)=0, \quad  \diff (L_\i^*)=\i(\sum_{\a\in \S_1} \, [X_\a,X_\a^*])\i,
\end{gather}
where the value of the differential on a space is zero means that the value is zero on each element in the space. The last equality needs some explanation,  let $\i\in \S_0$ be a vertex with a representative $i_0$, then the differential on a basis element of $L_{\i}^*$ (see \eqref{equ:basis} for the basis of $L_{\i}$) is defined as
\begin{equation}\label{equ:differential}
\begin{array}{ll}
&d\left(\left(\sum\limits_{s=0}^{|\i|-1}\varsigma^{s}(Y^j){\iota^s(i_0)}\right)^*\right)\\
\quad \\
=&\left(\sum\limits_{s=0}^{|\i|-1}\iota^s(i_0)\right)
\left(\sum\limits_{\a\in \S_1}\left[\sum\limits_{s=0}^{|\a|-1}\varsigma^{s}(Y^j)\iota^{s}(a_0),
\left(\sum\limits_{s=0}^{|\a|-1}\varsigma^{s}(Y^j)\iota^{s}(a_0)\right)^*\right]\right)
\left(\sum\limits_{s=0}^{|\i|-1}\iota^s(i_0)\right)\\
\quad \\
=&\sum\limits_{\substack{\a\in \S_1 \\ t(a_0)=i_0}}\left(\sum\limits_{s=0}^{|\a|-1}\varsigma^{s}(Y^j)\iota^{s}(a_0)\right)\otimes
\left(\sum\limits_{s=0}^{|\a|-1}\varsigma^{s}(Y^j)\iota^{s}(a_0)\right)^*\\
&-\sum\limits_{\substack{\a\in \S_1 \\ h(a_0)=i_0}}
\left(\sum\limits_{s=0}^{|\a|-1}\varsigma^{s}(Y^j)\iota^{s}(a_0)\right)^*\otimes
\left(\sum\limits_{s=0}^{|\a|-1}\varsigma^{s}(Y^j)\iota^{s}(a_0)\right),
\end{array}
\end{equation}
where $a_0$ is a representative in $\a$ and $\sum_{s=0}^{|\i|-1}\iota^s(i_0)$ is the identity in $L_\i$ .

\end{itemize}

\begin{remark}\label{rem:Frobmorp of dg algebra}
Let $\overline{\iota}$ be an automorphism of $\overline{Q}$ induced from $\iota$ as follows
\[
\overline{\iota}(a)=\iota(a), \quad \overline{\iota}(a^*)=(\iota(a))^*, \quad \overline{\iota}(i^*)=(\iota(i))^*.
\]
Then $\overline{\iota}$ is an admissible automorphism of $\Gamma Q$. As in \cite[Section 6]{DD1},
we have a (homogeneous) Frobenius morphism $F$ on $\qq$ induced by $\overline{\iota}$
such that the restriction to $\k Q$ (i.e. the degree zero part of $\qq$) is exactly \eqref{eq:Frob}.
Similarly, one may consider the $\F_q$-algebra $(\Gamma Q)^F$ consisting of the $F$-stable elements in $\Gamma Q$, and an isomorphism $\Gamma \S\cong (\Gamma Q)^F$ of dg algebras. In particular, under such an isomorphism the differential \eqref{equ:differential} on $\Gamma \S$ coincides with the differential on $(\Gamma Q)^F$:
\begin{equation}\label{equ:differential2}
\begin{array}{ll}
&d\left(\sum\limits_{s=0}^{|\i|-1}\varsigma^{s}(Y^j)({\iota^s(i_0)})^*\right)\\
\quad \\
=&\left(\sum\limits_{s=0}^{|\i|-1}\iota^s(i_0)\right)
\left(\sum\limits_{\a\in \S_1}\left[\sum\limits_{s=0}^{|\a|-1}\varsigma^{s}(Y^j)\iota^{s}(a_0),
\sum\limits_{s=0}^{|\a|-1}\varsigma^{s}(Y^j)(\iota^{s}(a_0))^*\right]\right)
\left(\sum\limits_{s=0}^{|\i|-1}\iota^s(i_0)\right)\\
\quad \\
=&\sum\limits_{\substack{\a\in \S_1 \\ t(a_0)=i_0}}\left(\sum\limits_{s=0}^{|\a|-1}\varsigma^{s}(Y^j)\iota^{s}(a_0)\right)\otimes
\left(\sum\limits_{s=0}^{|\a|-1}\varsigma^{s}(Y^j)(\iota^{s}(a_0))^*\right)\\
&-\sum\limits_{\substack{\a\in \S_1 \\ h(a_0)=i_0}}
\left(\sum\limits_{s=0}^{|\a|-1}\varsigma^{s}(Y^j)(\iota^{s}(a_0))^*\right)\otimes
\left(\sum\limits_{s=0}^{|\a|-1}\varsigma^{s}(Y^j)\iota^{s}(a_0)\right).
\end{array}
\end{equation}
Note that in \cite{K8}, Keller considered the $n$-Calabi-Yau completion of any homologically smooth algebra, and showed that the Ginzburg algebra can be viewed as a (deformed) $3$-Calabi-Yau completion of the path algebra. Such a completion is compatible with the Frobenius map $F$ and the following diagram is commutative, where the vertical arrows are the $3$-Calabi-Yau completion.
\begin{gather}
\label{eq:diagramed-algebra}
    \xymatrix@C=3pc{
    \F_q\S \ar[r]^\cong\ar[d] & (\k Q)^F \ar@{^{(}->}[r] \ar[d] & \k Q \ar[d]\\
    \Gamma\S \ar[r]^{\cong} & (\qq)^F \ar@{^{(}->}[r] & \qq}
    \end{gather}
\end{remark}

\subsection{Derived categories}
\label{subsection:Finite dimensional derived categories}
From now on, for a species $\S$, we always assume it is obtained from folding a quiver $Q$.
Write $\D(\qq)$ for $\hua{D}_{fd}(\mod  \qq)$,
the \emph{finite dimensional derived category} of $\qq$
(cf. \cite[Section~7.3]{K10}).
Similarly, write $\D(\ss)$ for $\D_{fd}(\mod\ss)$,
the derived category of finite dimensional dg modules for $\ss$. Then \cite[Theorem 4.8]{K8}
says they are both Calabi-Yau-$3$ in the following sense.

Let $\k'$ be a field ($\k$ or $\F_q$ for the case of quiver or species respectively).
Recall that a $\k'$-linear triangulated category $\hua{D}$ is called \emph{Calabi-Yau}-$3$,
if for any objects $L,M$ in $\hua{D}$ we have non-degenerate bifunctorial pairings
\begin{gather}\label{eq:serre}
    \Hom^{i}(M,L)\times
    \Hom^{3-i}(L,M)\rightarrow \k'.
\end{gather}

Let $\D(Q)$ (resp. $\D(\S)$) be the bounded derived category of module category $\h_Q=\mod \k Q$ (resp. $\h_\S=\mod \F_q\S$). Then there is an exact and faithful functor (called \emph{L-immersion}, cf. \cite{K8}\cite[Section~7.3]{KQ})
\begin{equation}\label{map:L immersion-quiver}
    \imm_Q\colon\D(Q) \to \D(\qq)
\end{equation}
such that, for any pair of objects $(M,L)$ in $\D(Q)$,
there is a short exact sequence
\begin{equation}\label{eq:lagrangian}
    0 \to  \Hom^{\bullet}(M,L)
    \xrightarrow{\imm_Q}
    \Hom^{\bullet}(\imm_Q(M),\imm_Q(L))
    \xrightarrow{\imm_Q^\dagger}
    \Hom^{\bullet}(M,L)^\vee[-3] \to 0.
\end{equation}
Here, the dual of a graded vector space $V=\oplus_{i\in\kong{Z}} V_i[i]$ is
\[V^\vee=\bigoplus_{i\in\kong{Z}} V_i^*[-i],\]
where $V_i$ is an ungraded vector space and $V_i^*$ is its usual dual.

Similarly, we have an L-immersion
\begin{equation}\label{map:L immersion-species}
    \imm_{\S}\colon\colon\D(\S)\to\D(\ss)
\end{equation}
satisfying the corresponding \eqref{eq:lagrangian}.

\subsection{Tilting}
For a full subcategory $\hua{P}$ in a triangulated category $\D$, define
\[\hua{P}^{\perp}
=\{L\in \D:\Hom_{\D}(M,L)=0 \text{ for
all }M\in\hua{P}\}.\]
We call $\hua{P}$ a \emph{t-structure} on $\D$ if $\hua{P}[1]\subset\hua{P}$ and for every
object $N\in\D$ there is a triangle $M\to N\to L$ in $\D$
with $M\in\hua{P}$ and $L\in \hua{P}^{\perp}$.
Its \emph{heart} is defined to be
\[\hua{H}=\hua{P}\cap\hua{P}^{\perp}[1],\]
which is an abelian category.
We always assume that a t-structure is \emph{bounded}, that is,
$$\D=\bigcup_{i,j\in \mathbb{Z}}\hua{P}^{\perp}[i]\cap\hua{P}[j].$$
Then a bounded t-structure is determined by its heart.
Then the inclusion relation $\hua{P}_1\supset\hua{P}_2$ equips the set of hearts with a partial order by $\h_1\leq\h_2$, where $\h_1=\hua{P}_1\cap\hua{P}_1^{\perp}[1]$ and $\h_2=\hua{P}_2\cap\hua{P}_2^{\perp}[1]$.

For an abelian category $\hua{H}$, we call a pair $\<\hua{F},\hua{T}\>$ of full subcategories a torsion pair if $\hua{F}=\hua{T}^{\perp}$ and for every
object $N\in\h$ there is a short exact sequence $M\to N\to L$ in $\h$
with $M\in\hua{T}$ and $L\in \hua{F}$. We call $\hua{T}$ (resp. $\hua{F}$) the torsion part (resp. torsion free part) of the torsion pair and write $\h=\<\hua{F},\hua{T}\>$.

By \cite{HRS},
for any heart $\h$ in a triangulated category $\D$
with torsion pair $\<\hua{F},\hua{T}\>$,
there exists the following two hearts in $\D$ with torsion pairs
\[
    \h^\sharp=\<\hua{T},\hua{F}[1]\>,\quad \h^\flat=\<\hua{T}[-1],\hua{F}\>.
\]
We call $\h^\sharp$ the \emph{forward tilt} of $\h$
with respect to the torsion pair $\<\hua{F},\hua{T}\>$,
and $\h^\flat$ the \emph{backward tilt} of $\h$.
If $\hua{F}=\<S\>$ is generated by a simple object in an abelian category $\hua{H}$, then
$\hua{H}=\<\hua{F},{}^{\perp}\hua{F}\>$ is a torsion pair,
where $^{\perp}\hua{F}=\{L\in \hua{H}:\Hom_{\D}(L,M)=0 \text{ for all }M\in\hua{F}\}$. In this case, we say the forward tilt is \emph{simple} and denote it by $\tilt{\h}{\sharp}{S}$. Similarly we have simple backward tilt $\tilt{\h}{\flat}{S}$.

\subsection{Exchange graphs}\label{sec:exchange graphs}

The notion of simple titling leads to exchange graphs.
Define the \emph{total exchange graph} $\EG \D$ of a triangulated category $\D$
to be the oriented graph whose vertices are all hearts in $\D$
and whose edges correspond to simple forward tilts between them.
For any $\h_0\in\EG \D$, we consider the full subgraph of $\EG \D$
induced by the interval $\{\h\mid\h_0\leq\h\leq\h_0[1]\}$ and let
$\EG_3(\D, \h_0)$ be its principal component, that is, the connected component
consisting of hearts that can be iterated simple tilted from $\h_0$.
Denote by $\EG(\D,\h_0)$ the principal component of $\EG \D$, that is the connected component of $\EG \D$ which contains $\h_0$.

In this section we collect some results about the exchange graphs of the derived categories $\D(Q)$, $\D(\qq)$, $\D(\S)$ and $\D(\ss)$. Recall from \cite[Section 9.3]{K08} that $\D(\qq)$ admits a standard heart $\gq$, generated by the simple $\qq$-modules, which is equivalent to the the category $\hua{H}_Q$. We denote $\Sim\hq=\{S_i\}_{i\in Q_0}$ the set of simple modules in $\hq$ and by $\Sim\gq=\{T_i\}_{i\in Q_0}$ the set of simple objects in $\gq$.
Then \cite[Theorem~8.1]{KQ} says that the $L$-immersion $\hua{L}$ \eqref{map:L immersion-quiver} maps $\hua{H}_Q$ to $\hua{G}_Q$ with $\hua{L}(S_i)=T_i$.
Moreover there is an isomorphism (as graph) induced by $\hua{L}$:
\begin{equation}\label{eq:L_Q}
    {\imm_Q}_*\colon \EG_3(\D(Q), \hq)\cong\EG_3(\D(\qq), \gq).
\end{equation}

Similarly, $\D(\ss)$ admits a standard heart $\gs$, generated by the simple $\ss$-modules $\Sim\gs=\{T_{\i}\}_{\i\in \S_0}$, which is equivalent to the category $\hua{H}_\S$ of finite dimensional ($\F_q$) modules over $\F_q \S$ with simple modules $\Sim\ss=\{S_\i\}_{\i\in \S_0}$.
Then the $L$-immersion \eqref{map:L immersion-species} maps $\hua{H}_\S$ to $\hua{G}_\S$ with $\hua{L}(S_\i)=T_\i$ and induces an isomorphism (as graph)
\begin{equation}\label{eq:L_S}
    {\imm_{\S}}_*\colon \EG_3(\D(\S), \hs)\cong\EG_3(\D(\ss), \gs).
\end{equation}

\subsection{Spherical twist groups}

Now we recall the spherical twist group, which can be used to compare the exchange graphs.
Recall that, i.e. from \cite{ST}, an object $T$ in a $\k'$-linear triangulated category is a \emph{$3$-spherelike $\k'$-object} if
\[\Hom^{\bullet}(T, T)=\k' \oplus \k'[-3].\]
For example, any $T_i\in \Sim\gq$ is a $3$-spherelike $\k$-object in $\D(\qq)$ by \cite{K4} (cf. \cite{ST}).

A $3$-spherelike $\k'$-object $T$ in a Calabi-Yau-$3$ triangulated category $\D$ is called a $3$-spherical $\k'$-object, which induces the \emph{twist functor} \cite{ST}
\begin{gather*}
    \phi_T(X)=\Cone\left(X\to \Hom^\bullet(X,T)^\vee\otimes T\right)[-1]
\end{gather*}
with inverse
\begin{gather*}
    \phi_T^{-1}(X)=\Cone\left(\Hom^\bullet(T,X)\otimes T\to X\right)
\end{gather*}
in the auto-equivalence group $\Aut\D$ of $\D$.

Denote by $\Br\qq$ the \emph{spherical twist group} of $\D(\qq)$,
that is, the subgroup of $\Aut\D(\qq)$ generated by $\{\phi_{T}\}_{T\in\Sim\gq}$.
Let $\EG(\D(\qq),\gq)$ be the principal component of the exchange graph $\EG(\D(\qq))$ containing the standard heart $\hua{G}_Q$. Then by \cite[Theorem~8.6]{KQ}, the action of $\Br\qq$ on $\D(\qq)$ induces an action on $\EG(\D(\qq),\gq)$ which gives an isomorphism (as vertex set)
\begin{equation}\label{eq:br1}
\EG_3(\D(\qq), \gq)\cong\EG(\D(\qq),\gq)/\Br({\qq}).
\end{equation}

Similarly, for $\ss$ of a species $\S$, the spherical twist group $\Br\ss$ is defined and we have an isomorphism (as vertex set)
\begin{equation}\label{eq:br2}
\EG_3(\D(\ss), \gs)\cong\EG(\D(\ss),\gs)/\Br\ss.
\end{equation}

\subsection{A useful lemma}\label{app:a useful lemma}
In this section, we recall the following proposition which is proved in \cite[Proposition~5.4]{KQ} (see also in \cite[section 7.2]{KY}) when $\k'=\k$ be an algebraically-closed field. This can be straightforwardly generalized to the case $\k'=\F_q$, where the difference is that the tensor needs to be carefully dealt with.  Then we prove a lemma. They will be used in Section \ref{Sec:$F$-stable}.

\begin{proposition}
\label{pp:fini}
In a $\k'$-linear triangulated category $\D$,
let $S$ be a rigid simple object in a finite heart $\h$, that is a heart generated by finite set $\Sim\h$ by means of extensions. Then after a forward or backward simple tilt the new simples are
\begin{eqnarray}
  \Sim\tilt{\h}{\sharp}{S} &=&
    \{\tilt{\psi}{\sharp}{S}(X)\mid X\in \Sim\h,X\neq S\}\cup\{S[1]\}\;\;,
\label{eq:psp-1} \\
  \Sim\tilt{\h}{\flat}{S} &=&
    \{\tilt{\psi}{\flat}{S}(X)\mid X\in \Sim\h,X\neq S\}\cup\{S[-1]\}\;,
\label{eq:psp-2}
\end{eqnarray}
where
\begin{eqnarray}
\label{eq:psi+}
  \tilt{\psi}{\sharp}{S}(X)
&=&\Cone\left(X\to \Ext^1(X, S)^* \otimes_E S[1] \right) [-1],
\\ \label{eq:psi-}
  \tilt{\psi}{\flat}{S}(X)
&=&\Cone \left( \Ext^1(S, X)\otimes_E S[-1] \to X \right),
\end{eqnarray}
and $E=\End(S)$.
Thus $\tilt{\h}{\sharp}{S}$ and $\tilt{\h}{\flat}{S}$ are also finite.
\end{proposition}
\begin{remark}
Note that since $S$ is simple, $E$ is a division ring over $\k'$, so $E=\k$ when $\k'=\k$, and the above proposition is just Proposition~5.2 in \cite{KQ}.
\end{remark}
\begin{lemma}
\label{lem:setofsimple}
Let $\h$ be a heart in a triangulated category $\D$ with rigid simples $R_1,\ldots,R_m$
such that $\Hom^\bullet(R_i,R_j)=0$ for any $1\leq i\neq j \leq m$.
Then there is a torsion pair $\<\hua{F},\hua{T}\>$ with respect to which the forward tilt of
$\h$ equals to $\h_m$, where
\[
    \h_0=\h, \quad \h_i=\tilt{(\h_{i-1})}{\sharp}{R_{\rho(i)}}
\]
and $\rho$ is any (fixed) permutation of $m$ elements.
Denote the tilt $\h_m$ of $\h$ by $\tilt{\h}{\sharp}{R_1,\ldots,R_m}$.
\end{lemma}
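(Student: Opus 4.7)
The plan is to produce an explicit torsion pair $\<\hua{F},\hua{T}\>$ on $\h$ whose Happel-Reiten-Smal\o{} forward tilt coincides with the iterated simple tilt $\h_m$, and to verify this identification by induction on $m$. Since the $R_i$ are rigid simples with $\Hom^\bullet(R_i,R_j)=0$ for $i\neq j$, they admit no mutual or self-extensions; hence their additive closure $\hua{F}:=\operatorname{add}(R_1,\ldots,R_m)$ in $\h$ is already closed under extensions. Set $\hua{T}:=\{X\in\h \mid \Hom_\h(R_i,X)=0 \text{ for all } i\}$. For each $X\in\h$, let $F(X)\subseteq X$ be the subobject generated by the images of all maps $R_i\to X$; it is a direct sum of copies of the $R_i$, and the vanishing of $\Ext^1(R_i,R_j)$ lets one lift morphisms to conclude $\Hom(R_i, X/F(X))=0$. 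Thus $\<\hua{F},\hua{T}\>$ is a torsion pair; write $\h':=\<\hua{T},\hua{F}[1]\>$ for the associated forward tilt.

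Next, I would induct on $m$, the case $m=1$ being the definition of a simple forward tilt at a rigid simple. For the inductive step, first tilt at $R:=R_{\rho(1)}$ to obtain $\h_1=\tilt{\h}{\sharp}{R}$. For each $j\neq\rho(1)$, the orthogonality $\Hom^\bullet_\D(R,R_j)=0$ places $R_j$ in the torsion-free part of this tilt, and the vanishing of both $\Hom$ and $\Ext^1$ with $R$ guarantees that $R_j$ persists as a simple object in $\h_1$. Rigidity and the mutual $\Hom^\bullet$-orthogonality within $\{R_j\mid j\neq\rho(1)\}$ are intrinsic properties in $\D$ and therefore carry over verbatim to $\h_1$. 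Applying the induction hypothesis inside $\h_1$ to these remaining simples along the induced permutation then realizes $\h_m$ as a single HRS tilt of $\h_1$.

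To conclude, one checks that the composition of the simple tilt at $R$ and the subsequent HRS tilt on $\h_1$ coincides with the HRS tilt of $\h$ at $\<\hua{F},\hua{T}\>$: the combined torsion-free subcategory is precisely $\hua{F}$, while the combined torsion subcategory is exactly $\hua{T}$. This gives $\h_m=\h'$ independently of $\rho$, justifying the notation $\tilt{\h}{\sharp}{R_1,\ldots,R_m}$. The main technical obstacle is this composition-of-tilts step: one must verify that tilting twice at two ``orthogonal'' pieces of a torsion pair gives the same heart as tilting once at the combined pair. The full $\Hom^\bullet$-vanishing between distinct $R_i$, rather than merely $\Hom$- and $\Ext^1$-vanishing, is precisely what rules out unexpected extensions appearing in the intermediate heart $\h_1$ that could perturb the final tilt; everything else is an exercise in unpacking the HRS construction.
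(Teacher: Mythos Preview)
Your overall plan is reasonable and genuinely different from the paper's: the paper never writes down the torsion pair explicitly, but instead uses the formulae of Proposition~\ref{pp:fini} to show that every simple of each intermediate $\h_i$ admits a filtration with factors in $\Sim\h\cup\Sim\h[1]$, deduces $\h\leq\h_i\leq\h[1]$, cites \cite[Lemma~2.9]{Q} to conclude that $\h_m$ is \emph{some} forward tilt of $\h$, and then checks independence of $\rho$ by direct computation with those same formulae. Your approach is more constructive and avoids the external citation.

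There is, however, a genuine error in your construction: the orthogonality runs in the wrong direction. In the paper's convention (Lemma~\ref{lem:simpletilt}) the decomposition reads $0\to M^{\hua T}\to M\to M^{\hua F}\to 0$, so for the simple forward tilt at $S$ the torsion-free class $\hua F=\langle S\rangle$ appears as the \emph{quotient} and the torsion class is the left orthogonal $\hua T={}^{\perp}S=\{X:\Hom(X,S)=0\}$. You instead build the \emph{subobject} $F(X)$ generated by maps \emph{from} the $R_i$ and set $\hua T=\{X:\Hom(R_i,X)=0\}$, the right orthogonal. This does yield a torsion pair, but with $\operatorname{add}(R_i)$ playing the role of the torsion class, and neither its forward nor its backward HRS tilt equals $\h_m$: the forward tilt leaves the $R_i$ unshifted. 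Already for $m=1$ your $\hua T$ differs from the correct class; for instance on $A_2$ with $R$ the projective simple, the length-two indecomposable projective lies in ${}^{\perp}R$ but not in $R^{\perp}$. The fix is immediate: take $\hua T=\{X\in\h:\Hom(X,R_i)=0\ \forall i\}$ and realise $\operatorname{add}(R_i)$ as the quotient part of the decomposition (the obstruction to peeling off such a maximal quotient vanishes since $\Ext^1(R_j,R_i)=0$). With this correction your induction and the composition-of-tilts step go through as you outline.
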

\begin{proof}
Fix a permutation $\rho$.
By repeatedly using Proposition~\ref{pp:fini},
we inductively deduce that any simple in $\h_i$, for $0\leq i\leq m$,
admits a filtration of triangles with factors in $\Sim\h\cup\Sim\h[1]$.
Thus, its homology with respect to $\h$ lives only in degree zero and one.
By \cite[Lemma~5.6]{KQ}, this means $\h\leq \h_i\leq\h[1]$.
By \cite[Lemma~2.8]{Q}, we know that $\h_i$ is the forward tilt of $\h$
with respect to some torsion pair.
Then, using formulae in Proposition~\ref{pp:fini}, a direct calculation shows that
the tilt $\h_m$ is independent of the choice of permutation, as required.
\end{proof}

\subsection{Stability conditions}\label{app:stability conditions}
In this section we recall some basic notions and facts on stability conditions we will use in this paper. For precise definitions, we refer the reader to \cite{B1}.

A \emph{stability function} on an abelian category $\hua{C}$
is a group homomorphism $Z:\K(\hua{C})\to\kong{C}$ such that
for any object $0\neq M\in\hua{C}$, we have
$Z(M) =m(M) \cdot e^{\mathbf{i}\pi \phi(M)}$ for some
$m(M) \in \kong{R}_{>0}$ and $\phi(M) \in (0,1]$,
i.e. $Z(M)$ lies in the upper half-plane
\begin{gather}\label{eq:H}
    H=\{r\exp(\mathrm{i} \pi \theta)\mid r\in\kong{R}_{>0},0< \theta\leq 1 \}
    \subset\kong{C}.
\end{gather}

\begin{definition}
A \emph{stability condition} $\sigma$ on a triangulated category $\D$ consists of a heart $\h$ and a stability function $Z$ on $\h$ with the \emph{Harder-Narashimhan property},
which will be denoted by $\sigma=(\h,Z)$.
\end{definition}

The data in a stability condition $\sigma=(\hua{H},Z)$ is equivalent to the pair $(Z,\hua{P})$, where $\hua{P}$ is a \emph{slicing} $\hua{P}=\{\hua{P}(\phi)\mid\phi\in\mathbb{R}\}$,
whose canonical heart is $\hua{H}$ and is compatible with $Z$.
More precisely, the slicing is a collection of additive subcategories of a triangulated category $\D$ satisfying
\begin{itemize}
\item $\hua{P}(\phi+1)=\hua{P}(\phi)[1], \forall \phi\in\mathbb{R}$.
\item $\Hom(\hua{P}(\phi_1),\hua{P}(\phi_2))=0$ for $\phi_1>\phi_2$,
\item Any object $M$ in $\D$ admits a Harder-Narashimhan filtration, i.e.
\begin{equation}\label{eq:HN}
    0 =
    \xymatrix @C=5mm{
     M_0 \ar[rr]   &&  M_1 \ar[dl] \ar[rr] && M_2 \ar[dl]
     \ar[r] & \dots  \ar[r] & M_{n-1} \ar[rr] && M_n \ar[dl] \\
    & A_1 \ar@{-->}[ul] && A_2 \ar@{-->}[ul] &&&& A_n \ar@{-->}[ul]
    }
    = M
\end{equation}
with $A_i\in\hua{P}(\phi_i)$ satisfying $\phi_1 > \phi_2 > \cdots > \phi_n$. We denote by $\phi^+(M)=\phi_1$ and $\phi^-(M)=\phi_n$.
\end{itemize}
The compatibility condition between $\hua{P}$ and $Z$ mentioned above means
$$Z(M) =m(M) \cdot e^{\mathbf{i}\pi \phi(M)}$$ for any $M\in\hua{P}(\phi)$.
We will also write $\sigma=(Z, \hua{P})$.

Denoted by $\Stab\D$ the space of all stability conditions on
$\D$, then the function
\begin{equation}\label{equ:matric}
d(\sigma_1,\sigma_2)=\sup_{0\neq M\in\D}
\bigg\{|\phi^-_{\sigma_2}(M)-\phi^-_{\sigma_1}(M)|,|\phi^+_{\sigma_2}(M)-\phi^+_{\sigma_1}(M)|
,|\log \frac{m_{\sigma_2}(M)}{m_{\sigma_1}(M)}|\bigg\}
\end{equation}
defines a (generalised) metric on $\Stab(\D)$.
Recall a crucial result due to Bridgeland.

\begin{theorem}\label{thm:B1}\cite[Theorem~1.2]{B1}
Each connected component of $\Stab\D$ is locally homeomorphic to a linear sub-manifold of
$\Hom_{\kong{Z}}(\K(\D),\kong{C})$, sending a stability condition $(\h, Z)$ to its central change $Z$. In particular, $\Stab\D$ is a complex manifold.
\end{theorem}

Note that every finite heart $\h$
corresponds to a (complex, half closed and half open) $n$-cell
\[\cub(\h)\simeq H^n\]
inside $\Stab\D$, where $H$ is defined as in \eqref{eq:H}.

\subsection{Frobenius functors}
We mention that by \cite{DD1}, there is an algebra isomorphism $(\k Q)^F\cong\F_q\S$.
Further, by \cite{DD2} there is a Frobenius functor in $\Aut\D(Q)$ induced by $F$, which we also denote by $F$.
For any object $M$ in $\D(Q)$, denote by $p(M)$ the $F$-period of $M$,
that is, the minimal positive integer $m$, such that $F^m(M)=M$.
Then $p(M)$ is finite for any $M$ and let
\begin{gather}
\label{eq:tilde}
    \widetilde{M}=\bigoplus_{j=1}^{p(M)} F^j(M).
\end{gather}
We say $M$ is an $F$-stable object if $p(M)=1$, i.e. $F(M)=M$.
We say a subcategory $\hua{C}$ of $\D(Q)$ is $F$-stable if $F(\hua{C})=\hua{C}$.

Let $\D(Q)^F$ be the $F$-stable category of $\D(Q)$,
whose objects are $F$-stable objects of $\D(Q)$
and whose morphisms are the ones which commute with $F$
(see \cite[Remark~5.5]{DD2} for details). We point out that $\D(Q)^F$ is not a full subcategory of $\D(Q)$.
Then $\D(Q)^F$ is a triangulated category and there is a derived equivalence
\begin{gather}
\label{eq:Phi}
    \Phi\colon\D(\S)\cong\D(Q)^F
\end{gather}
such that
\[
    \Phi(\hs)=\hq^F\quad\text{and}\quad
    \Phi(S_\i)=\bigoplus_{i\in\i} S_i,
\]
where we view $\hq$ as a full subcategory of $\D(Q)$ and $\hq^F$ is the $F$-stable subcategory of $\hq$ which is generated by the $F$-stable objects in it.
By \eqref{eq:tilde}, the second equation is equivalent to $\Phi(S_\i)=\widetilde{S}_i$ for any $i\in\i$, and we refer to $\widetilde{S}_\i$.
Further, for any $X\in\Ind\D(Q)^F$, the set of indecomposables in $\D(Q)^F$,
there exists $M\in\Ind\D(Q)$, the set of indecomposables in $\D(Q)$, such that $X=\widetilde{M}$.

The Frobenius morphism $F$ (cf. Remark \ref{rem:Frobmorp of dg algebra}) on $\Gamma Q$ also induces a Frobenius functor, still denoted by $F$, on $\D(\qq)$, which is an auto-equivalence. Denote the $F$-stable category of $\D(\qq)$ as $\D(Q)^F$ by $\D(\qq)^F$.
Then we have the following diagram, where the dashed arrow will be defined in the next
section in such a way that the diagram commutes
\begin{gather}\label{eq:diagram}
\xymatrix@C=3pc{
    \D(\S) \ar[r]^{\Phi}_{\cong}\ar[d]^{\imm_\S} & \D(Q)^F \ar@{^{(}->}[r]\ar[d]^{\imm_Q} & \D(Q) \ar[d]^{\imm_Q}\\
    \D(\ss) \ar@{-->}[r]^? & \D(\qq)^F \ar@{^{(}->}[r] & \D(\qq).}
\end{gather}

\section{Folding Calabi-Yau category}\label{Sec:Folding}
In this section, we aim to complete \eqref{eq:diagram}.

\begin{proposition}\label{pp:Theta}
There is a faithful functor $\Theta\colon\D(\ss)\to\D(\qq)$,
sending $T_{\i}$ to $\widetilde{T_i}=\bigoplus_{i\in\i} T_i$ and inducing a derived equivalence
\begin{gather}
\label{eq:Theta}
    \Theta\colon\D(\ss)\cong\D(\qq)^F.
\end{gather}
\end{proposition}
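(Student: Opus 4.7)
The plan is to construct $\Theta$ as a derived extension of scalars along an explicit folded inclusion of dg algebras $\ss \hookrightarrow \qq$, and then to verify its properties by transposing the Deng--Du argument behind \eqref{eq:Phi} to the Calabi--Yau setting.

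First, I would lift the admissible automorphism $\sigma$ to a dg algebra automorphism of $\qq$. Recall (Appendix~\ref{app:CY}) that $\qq$ is generated over $\k$ by the vertex idempotents $e_i$ and the arrows $a \in Q_1$ in degree~$0$, their duals $a^*$ in degree~$-1$, and loops $t_i$ in degree~$-2$, with differential given by the cyclic derivative of the canonical superpotential. Since $\sigma$ permutes vertices and arrows preserving incidence, it extends in the evident way ($a^* \mapsto \sigma(a)^*$, $t_i \mapsto t_{\sigma(i)}$) and commutes with the differential. Combining this with the field automorphism $\v$ on $\k$ yields a Frobenius morphism on $\qq$, still denoted $F$, whose degree-zero restriction is Deng--Du's Frobenius on $\k Q$.

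Next I would identify the $F$-fixed dg subalgebra with $\ss$. In degree~$0$ this is the known $(\k Q)^F \cong \F_q \S$; in higher degrees, the dual arrows and loops split into $\sigma$-orbits in exactly the same way as the arrows do, and the superpotential differential is manifestly $\sigma$-equivariant. This produces a dg algebra identification $\iota \colon \ss \xrightarrow{\cong} (\qq)^F \hookrightarrow \qq$, along which I define $\Theta$ to be derived extension of scalars. To evaluate $\Theta$ on a simple $T_\i$ of $\gs$, note that the idempotent $e_\i \in \ss$ corresponds under $\iota$ to $\sum_{i \in \i} e_i \in \qq$, so $\Theta(T_\i)$ is a $\qq$-module supported on the orbit $\i$ with one-dimensional stalk at each $i$, i.e.\ $\widetilde{T_i} = \bigoplus_{i \in \i} T_i$. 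These generators are $F$-stable by construction, so $\Theta$ factors through $\D(\qq)^F$, and faithfulness follows from the injection $\Hom_\ss^\bullet(T_\i,T_\j) \hookrightarrow \Hom_\qq^\bullet(\widetilde{T_i},\widetilde{T_j}) \cong \Hom_\ss^\bullet(T_\i,T_\j)\otimes_{\F_q}\k$.

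The main obstacle, and the core of the argument, is showing that the corestriction $\Theta \colon \D(\ss) \to \D(\qq)^F$ is essentially surjective. Following the pattern of \eqref{eq:Phi}, I would argue that $\{\widetilde{T_i}\}_{\i \in \S_0}$ generates $\D(\qq)^F$: by \eqref{eq:congs} the $T_i$ generate $\D(\qq)$, and every $F$-stable indecomposable is (up to shift) of the form $\widetilde{M}$ for some $M \in \D(\qq)$, so the orbit sums of simples already suffice. Matching graded $\Hom$-spaces in $\D(\ss)$ with their $\Theta$-images in $\D(\qq)^F$ then reduces, by Galois descent from $\k$ to $\F_q$, to the dg algebra identification $\ss \cong (\qq)^F$ of step two. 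The delicate point throughout is verifying that folding is compatible with the \emph{full} Ginzburg dg structure, not merely with the path algebra in degree~$0$; once this compatibility is in place, the remaining derived-equivalence argument is a transposition of Deng--Du's bounded-derived proof into the world of finite dimensional dg modules.
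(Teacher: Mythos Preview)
Your approach differs from the paper's in two places, one of which hides a genuine gap.

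For the construction of $\Theta$, you work directly with the dg algebras: identify $(\qq)^F \cong \ss$ by checking degree by degree that folding commutes with the Ginzburg construction, and then take derived extension of scalars along the inclusion. The paper stays on the categorical side instead. It uses the L-immersions $\imm_Q$ and $\imm_\S$ of Proposition~\ref{pp:L-imm}, together with the known Deng--Du equivalence $\Phi$ of \eqref{eq:Phi}, to compute
\[
\End^\bullet_{\D(\qq)^F}\Big(\bigoplus_{\i\in\S_0} \widetilde{T_\i}\Big)
\;\cong\;
\End^\bullet_{\D(\ss)}\Big(\bigoplus_{\i\in\S_0} T_\i\Big),
\]
and reads off $\Theta$ from this isomorphism of graded endomorphism algebras. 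Your route is more explicit and explains \emph{why} the two endomorphism algebras match; the paper's route trades that for not having to verify, differential by differential, that the Ginzburg data are compatible with $F$---the short exact sequence \eqref{eq:lagrangian} lets it import the bounded-derived comparison $\Phi$ wholesale.

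For essential surjectivity, your sketch has a gap. You assert that every $F$-stable indecomposable of $\D(\qq)$ is of the form $\widetilde{M}$, but the paper only records this fact for $\D(Q)$ (via \cite{DD2}), and it is not automatic for $\D(\qq)$. Even granting it, the step ``so the orbit sums of simples already suffice'' does not follow: knowing that $M$ is generated by the $T_i$ inside $\D(\qq)$ does not by itself show that $\widetilde{M}$ is generated by the $\widetilde{T_\i}$ inside $\D(\qq)^F$, since the intermediate steps of a filtration in $\D(\qq)$ need not be $F$-stable. The paper's argument avoids both issues: take the canonical filtration \eqref{eq:canonfilt} of an arbitrary $F$-stable $M$ with respect to the standard heart $\gq$; uniqueness of this filtration forces each homology $H_i$ to be $F$-stable, hence to lie in $\gq^F$; and generation of $\gq^F$ by the $\widetilde{T_\i}$ is exactly the heart-level Deng--Du statement transported through the isomorphism \eqref{eq:congs}. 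If you want to salvage your version, this t-structure reduction is the missing ingredient.
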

\begin{proof}
By the equivalence $\gq\cong \hq$, $\widetilde{T_i}$ as in \eqref{eq:tilde} is well-defined for $i\in Q_0$ and
\[\Theta(T_\i):=\widetilde{T_i}\in\D(\qq)^F\]
for any $i\in\i$.

Moreover, we show that
$\{\Theta(T_\i)\}_{\i\in\S_0}$ generates $\D(\qq)^F$.
In fact, on the one hand, $\{\Theta(T_\i)\}_{\i\in\S_0}$ generates $\gq^F$ since $\gq\cong \hq$ and $\{\widetilde{S_\i}\}_{\i\in\S_0}$ generates $\hq^F$.
On the other hand, any $M\in\D(\qq)^F$ admits a canonical filtration
\begin{equation}
\label{eq:canonfilt}
\xymatrix@C=0,5pc{
  0=M_0 \ar[rr] && M_1 \ar[dl] \ar[rr] &&  \cdots\ar[rr] && M_{m-1}
        \ar[rr] && M_m=M \ar[dl] \\
  & H_1[k_1] \ar@{-->}[ul]  && && && H_m[k_m] \ar@{-->}[ul]
  }
\end{equation}
where $H_i \in \gq$ and $k_1 > \ldots > k_m$ are integers.
Since $M$ is $F$-stable and the filtration is unique, we deduce that
each \emph{homology} $H_i$ of $M$, with respect to $\gq$, is $F$-stable.
Hence, any of these homologies is generated by $\{\widetilde{T_\i}\}_{\i\in\S_0}$,
and so is $M$ as required.

Then we extend the definition $\Theta$ to any object in $\D(\ss)$ by using extensions and shifts of the heart $\gs$, which is generated by the simples $T_{\i}$, and then by using the HN-filtration of the object. Due to above discussion, $\Theta$ is a dense map from the set of objects in $\D(\ss)$ to the set of objects in $\D(\qq)^F$.

At last, we define $\Theta$ on the morphisms in $\D(\ss)$ through L-immersions \eqref{map:L immersion-species} and the equivalence $\Phi\colon\D(\S)\cong\D(Q)^F$ \eqref{eq:Phi}. Then $\Theta$ is an equivalence by the property \eqref{eq:lagrangian} of L-immersions and the fact that $\Phi$ is an equivalence, noticing indeed that \eqref{eq:lagrangian} has a natural splitting (see \cite[Lemma 4.4(b)]{K8}).
\end{proof}

Thus diagram \eqref{eq:diagram} can be completed to the following diagram, where the commutativity of the left square directly follows the definition of $\Theta$
\begin{gather}
\label{eq:diagramed}
    \xymatrix@C=3pc{
    \D(\S) \ar[r]^{\Phi}_{\cong}\ar[d]^{\imm_\S} & \D(Q)^F \ar@{^{(}->}[r]\ar[d]^{\imm_Q}
        & \D(Q) \ar[d]^{\imm_Q}\\
    \D(\ss) \ar[r]^{\Theta}_{\cong} & \D(\qq)^F \ar@{^{(}->}[r] & \D(\qq).}
\end{gather}

\section{$F$-stability for bounded derived categories}\label{Sec:$F$-stable}
In this section, we introduce the $F$-stable stability condition, and prove that when $(Q,\S)$ is of Dynkin type, we have a isomorphism $\FS\D(Q)\cong\Stab\D(\S)$, where $\FS\D(Q)$ is the space of $F$-stable stability condition on $\D(Q)$.

Note that the action of auto-equivalence $F$ on $\D(Q)$ induces an automorphism on $\K(\D(Q))$. Denote by $\K^F(\D(Q))$ the subgroup in $\K(\D(Q))$ consisting of $F$-stable elements.
Then we have a canonical homomorphism from $\K(\D(Q)^F)$ to $\K^F(\D(Q))$, which is an isomorphism by \cite[Theorem 7.2]{DD2}, see also \cite[Theorem 8.5]{DD2}. Further, $F$ also induces an action on $\Stab \D(Q)$ and we introduce the following

\begin{definition}\label{def:fstab stability conditions}
We call a stability condition $(\h, Z)$ in $\Stab\D(Q)$ $F$-stable if
\begin{itemize}
\item $\h$ is $F$-stable, that is $F(\h)=\h$;
\item $Z$ is $F$-stable, that is $Z(F(M))=Z(M)$.
\end{itemize}
Denote by $\FS\D(Q)$ the subspace in $\Stab\D(Q)$ consisting of $F$-stable stability conditions.
\end{definition}

Note that the equality $F(\h)=\h$ in the above definition is equivalent to an inclusion $F(\h)\subseteq \h$, since both $F(\h)$ and $\h$ are hearts of bounded t-structures.

For an $F$-stable heart $\h$, define its F-stabilization to be the full subcategory $\h^F$ in $\D(Q)^F$,
consisting of objects $\{\widetilde{M}\mid M\in\h\}$, where $\widetilde{?}$ is as in \eqref{eq:tilde}.
Then $\h^F$ is an abelian category, with short exact sequences in $\h^F$ being precisely the sum of short exact sequences in a same orbit.
\begin{lemma}
\label{lem:F-heart}
If $\h$ is an $F$-stable heart in $\D(Q)$, then its F-stabilization $\h^F$ is a heart in $\D(Q)^F$.
\end{lemma}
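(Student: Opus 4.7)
The plan is to lift the t-structure of $\h$ on $\D(Q)$ to a t-structure on $\D(Q)^F$ and then identify its heart with $\h^F$. Let $(\D(Q)^{\leq 0},\D(Q)^{\geq 0})$ be the t-structure whose heart is $\h$; since $\h$ is F-stable and $F$ is a triangulated auto-equivalence, both aisles are F-stable as well. I would define
\[
(\D(Q)^F)^{\leq 0}:=\D(Q)^{\leq 0}\cap\D(Q)^F,\qquad (\D(Q)^F)^{\geq 0}:=\D(Q)^{\geq 0}\cap\D(Q)^F
\]
and check the t-structure axioms.

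The shift-invariance $(\D(Q)^F)^{\leq 0}[1]\subset(\D(Q)^F)^{\leq 0}$ and the Hom-vanishing $\Hom((\D(Q)^F)^{\leq 0},(\D(Q)^F)^{\geq 1})=0$ are inherited directly from $\D(Q)$. The substantive axiom to verify is the existence of truncation triangles inside $\D(Q)^F$. Given $X\in\D(Q)^F$, I would take the canonical truncation triangle $\tau^{\leq 0}X\to X\to\tau^{>0}X\to\tau^{\leq 0}X[1]$ in $\D(Q)$; because $F$ preserves the t-structure and $F(X)=X$, applying $F$ produces another truncation triangle for $X$, so by uniqueness of truncations $F(\tau^{\leq 0}X)\cong\tau^{\leq 0}X$ and $F(\tau^{>0}X)\cong\tau^{>0}X$. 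Functoriality of $\tau^{\leq 0}$ then shows that the truncation and connecting morphisms commute with $F$, so the whole triangle lives in $\D(Q)^F$.

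It remains to identify the heart $\h\cap\D(Q)^F$ of this induced t-structure with $\h^F=\{\widetilde{M}\mid M\in\h\}$. One direction is straightforward: for $M\in\h$, F-stability of $\h$ gives $F^j(M)\in\h$ for every $j$, so the finite direct sum $\widetilde{M}=\bigoplus_{j=1}^{p(M)} F^j(M)$ is again in $\h$, and visibly in $\D(Q)^F$. For the reverse inclusion I would appeal to the structural fact recalled just before \eqref{eq:Phi} that every F-stable indecomposable of $\D(Q)$ is of the form $\widetilde{M}$; since $\h$ is abelian and therefore closed under direct summands, any such $\widetilde{M}\in\h$ forces $M\in\h$, which suffices.

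The most delicate step will be the F-equivariance of the truncations: having $F(X)\cong X$ only as an isomorphism of objects is weaker than the morphism-level compatibility demanded by the category $\D(Q)^F$, so I would need to combine uniqueness of truncation with the functoriality of $\tau^{\leq 0}$ applied to the given isomorphism $F(X)\to X$ in order to produce a genuine truncation triangle in $\D(Q)^F$ rather than one that only exists after forgetting the equivariant structure. Everything else is essentially a matter of intersecting with $\D(Q)^F$.
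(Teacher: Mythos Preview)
Your proposal is correct and rests on the same key mechanism as the paper---uniqueness of truncations under an auto-equivalence preserving the t-structure---but the two arguments are organized differently.

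The paper does not build a t-structure on $\D(Q)^F$ by intersection and then identify its heart. Instead it applies Bridgeland's heart criterion directly to $\h^F$: Hom-vanishing is checked as you do, and for the filtration axiom the paper writes an arbitrary $X\in\D(Q)^F$ as $\widetilde{M}$, takes the canonical $\h$-filtration of $M$, applies $F^j$ for $j=1,\dots,p(M)$, and direct-sums the resulting triangles to obtain a filtration of $\widetilde{M}$ with factors $\bigoplus_{j}F^j(H_i)[k_i]$. The one nontrivial point---that these factors really lie in $\D(Q)^F$, i.e.\ that $F^{p(M)}(H_i)=H_i$---is exactly your uniqueness-of-truncation observation, phrased instead as uniqueness of the full canonical filtration.

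What each approach buys: yours is cleaner conceptually (intersect the aisles, then check axioms) and makes explicit why nothing goes wrong at the level of morphisms, the point you correctly flag as delicate. The paper's approach avoids the separate identification step $\h\cap\D(Q)^F=\h^F$ at the end, since by construction each filtration factor is visibly of the form $\widetilde{(\,\cdot\,)}$ and hence already lies in $\h^F$; on the other hand, it silently uses the structural fact that every $X\in\D(Q)^F$ has the form $\widetilde{M}$, which you only invoke for the final identification.
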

\begin{proof}
Assume $a>b$ are any integers.
We use the criterion in \cite[Lemma~3.2]{B1} for the definition of hearts,
namely, an abelian category $\hua{A}$ in a triangulated category $\D$ is a heart
if and only if it satisfies the following conditions
\begin{itemize}
\item $\Hom_{\D}(A[a],B[b])=0$ for any $A,B\in\hua{A}$.
\item There is a canonical filtration \eqref{eq:canonfilt} for any $M$ in $\D$ with
$H_i \in \hua{A}$ and $k_1 > \ldots > k_m$ are integers.
\end{itemize}

For any $X$ and $Y$ in $\h^F$,
there exists $M$ and $L$ in $\h$ such that $X=\widetilde{M}$ and $Y=\widetilde{L}$ as in \eqref{eq:tilde}.
Since $\Hom_{\D(Q)}(A[a],B[b])=0$ for any $A,B\in\h$,
we have $\Hom_{\D(Q)^F}(X[a],Y[b])=0$.

Further, there is a canonical filtration \eqref{eq:canonfilt} of $M$
with $H_i \in \h$ and $k_1 > \ldots > k_m$ are integers.
Since $\h$ is $F$-stable, $F^j(H_i)$ is also in $\h$ which implies
the canonical filtration of $F^j(M)$ has factors $F^j(H_1)[k_1],\ldots,F^j(H_m)[k_m]$.
By direct summing the triangles in the filtrations of $F^j(M)$,
for $j=1,\ldots,p(M)$, we obtain a filtration of $\widetilde{M}$ in $\D(Q)$,
with factors
\[
    \bigoplus_{j=1}^{p(M)} F^j(H_1)[k_1],\ldots,\bigoplus_{j=1}^{p(M)} F^j(H_m)[k_m].
\]
To see that this induces the canonical filtration of $X=\widetilde{M}$ in $\D(Q)^F$
(under $\Phi$), we only need to show that $\bigoplus_{j=1}^{p(M)} F^j(H_i)[k_i]$ is $F$-stable,
or equivalently, $F^{p(M)}(H_i)=H_i$.
This follows by comparing the canonical filtrations of $F^{p(M)}(M)$ and $M$,
noticing that the canonical filtration is unique.
Therefore, $\h^F$ is a heart in $\D(Q)^F$.
\end{proof}

We think that the converse of Lemma~\ref{lem:F-heart} is also true,
but we only (need and) prove a partial result, and we will prove the inverse in Corollary \ref{cor:inverse of the lemma} for the case of Dynkin type. An immediate corollary for stability conditions is as follows.

\begin{corollary}
\label{cor:stab}
There is a canonical inclusion
\begin{gather}
\label{eq:iota1}
    \tau_Q: \FS\D(Q) \to\Stab\D(Q)^F
\end{gather}
sending a stability condition from $(\h, Z)$ to $(\h^F, Z^F)$,
where $Z^F(\widetilde{M})=Z(M)$, for any $M\in\D(Q)$.
Moreover, we have the following commutative diagram
\begin{gather}\label{eq:commdia}
\xymatrix{
    \FS\D(Q)  \ar@{->}[d]\ar[r]^{\tau_Q}  & \Stab\D(Q)^F  \ar@{->}[d] \\
    \Hom(\K^F(\D(Q)), \kong{C}) \ar@{-}[r]^{\cong} &  \Hom(\K(\D(Q)^F), \kong{C}),
}\end{gather}
where the vertical maps send a stability condition to its central charge.
\end{corollary}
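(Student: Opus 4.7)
The plan is to verify three things: that $(\h^F, Z^F)$ is indeed a stability condition on $\D(Q)^F$, that the resulting map $\iota_Q$ is injective, and that the square \eqref{eq:commdia} commutes. Lemma~\ref{lem:F-heart} already takes care of the heart part of the first point, so most of the work lies in defining $Z^F$ coherently and establishing the Harder--Narasimhan property.

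First I would check that $Z^F$ is well-defined. The formula $Z^F(\widetilde{M}) = Z(M)$ is forced, because $\widetilde{M} = \widetilde{F^j(M)}$ for every $j$; consistency $Z(F^j M) = Z(M)$ is precisely F-stability of $Z$. Under the canonical identification $\K(\D(Q)^F) \cong \K^F(\D(Q))$ induced by $\Phi$, this gives a group homomorphism to $\kong{C}$. The stability function axiom is then immediate: if $X = \widetilde{M} \in \h^F$ is nonzero, then $M \in \h$ by construction of $\h^F$, so $Z(M) \in H \cup \kong{R}_{<0}$ and hence so is $Z^F(X)$.

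The main step, and the only real obstacle, is the HN property for $(\h^F, Z^F)$. Given $X = \widetilde{M} \in \h^F$, take the HN filtration
\[
    0 = M_0 \subset M_1 \subset \cdots \subset M_k = M
\]
of $M$ with respect to $(\h, Z)$, with semistable factors $A_i = M_i/M_{i-1}$ of strictly decreasing phases $\phi_i$. F-stability of $(\h, Z)$ combined with uniqueness of HN filtrations forces the filtration to be F-equivariant: the HN filtration of $F(M)$ is $F(M_\bullet)$ with factors $F(A_i)$ of the same phases $\phi_i$. Summing over the F-orbit, the chain $0 \subset \widetilde{M_1} \subset \cdots \subset \widetilde{M_k} = X$ is a filtration in $\h^F$ with factors $\widetilde{A_i}$ of phase $\phi_i$ (since $Z^F(\widetilde{A_i}) = Z(A_i)$ by construction). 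Semistability of each $\widetilde{A_i}$ in $\h^F$ then follows by descending obstructions: any destabilising subobject in $\h^F$ would, via the embedding $\D(Q)^F \hookrightarrow \D(Q)$ of \eqref{eq:diagramed}, yield a subobject of $A_i$ in $\h$ of strictly larger phase, contradicting semistability of $A_i$.

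For injectivity, $\h$ is recovered from $\h^F$ as $\{M \in \D(Q) \mid \widetilde{M} \in \h^F\}$, and $Z$ is recovered on F-stable classes by $Z(M) = Z^F(\widetilde{M})$; since $Z \circ F = Z$, for any $x \in \K(\D(Q))$ the average $\sum_{j=1}^{p} F_*^j x$ lies in $\K^F(\D(Q))$ and has $Z$-value $p \cdot Z(x)$, so $Z$ is determined everywhere. Finally, the commutativity of \eqref{eq:commdia} is tautological from the definitions: both vertical maps send a stability condition to its central charge, and the bottom isomorphism identifies these by the very formula $Z^F(\widetilde{M}) = Z(M)$.
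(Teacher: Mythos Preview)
Your proof is correct and considerably more thorough than the paper's own argument, which is essentially a two-line sketch: it asserts that $\iota_Q$ is ``locally well-defined'' (implicitly invoking Lemma~\ref{lem:F-heart} for the heart) and then cites a gluing formula from \cite{Q} to patch the local pieces together across the cells $\cub(\h)$. No explicit verification of the Harder--Narasimhan property, of injectivity, or of the commutativity of \eqref{eq:commdia} is carried out there.

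Your route is genuinely different in that it works directly with the axioms: you transport the HN filtration of $M$ along the $F$-orbit to produce one for $\widetilde{M}$, and then check semistability of the factors by descent through the embedding $\D(Q)^F\hookrightarrow\D(Q)$. This is more elementary and self-contained; it does not rely on knowing how the cells $\cub(\h)$ fit together inside $\Stab$. One small imprecision is worth flagging: a destabilising subobject $Y \hookrightarrow \widetilde{A_i}$ in $\h^F$ becomes, under the embedding, a subobject of $\bigoplus_{j} F^j(A_i)$ in $\h$, not of $A_i$ itself. The argument survives unchanged, since a direct sum of $Z$-semistables of equal phase is again $Z$-semistable of that phase. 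A parallel notational caveat applies to the filtration terms you call $\widetilde{M_i}$: the $F$-period of $M_i$ may properly divide $p(M)$, so $\bigoplus_{j=1}^{p(M)} F^j(M_i)$ is in general a power of $\widetilde{M_i}$ rather than $\widetilde{M_i}$ itself --- again harmless, but worth writing precisely.
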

\begin{proof}
By Definition \ref{def:fstab stability conditions}, an $F$-stable stability condition $(\h, Z)$ satisfies $Z(F(M))=Z(M)$, so $Z^F$ is well-defined.
Thus we have the inclusion $\tau_Q$ and the canonical commutative diagram \eqref{eq:commdia}.
\end{proof}

Recall that $\EG(\D(Q),\h_Q)$ and $\EG(\D(\S),\h_\S)$ are the principal components of $\EG\D(Q)$ and $\EG\D(\S)$ respectively.
Note the derived equivalence \eqref{eq:Phi} induces
an isomorphism $\Phi\colon\EG\D(\S)\to\EG\D(Q)^F$ (we abuse the notation $\Phi$ here).

\begin{proposition}
\label{pp:F}
Any heart in $\Phi(\EG(\D(\S),\h_\S))$ is the F-stabilization of some heart in $\EG(\D(Q),\h_Q)$.
Moreover, if $\Phi(\h)=\hh^F$ for hearts $\h\in\EG(\D(\S),\h_\S)$ and $\hh\in\EG(\D(Q),\h_Q)$,
we have the following.
\numbers
\item $\Sim\h$ and $\Sim\hh$ can be written as $\{R_\i\}_{\i\in\S_0}$ and $\{R_i\}_{i\in Q_0}$,
respectively, such that $\Phi(R_\i)=\bigoplus_{i\in\i}R_i$.
\item For any $\i\in\S_0$ and $i_1,i_2\in\i$, $\Hom^\bullet(R_{i_1},R_{i_2})=0$.
\item For any $\i\in\S_0$, we have
\begin{gather}
\label{eq:tilts x}
    \Phi(\tilt{\h}{\sharp}{})=( \tilt{\hh}{\sharp}{} )^F\quad\text{and}\quad
    \Phi(\tilt{\h}{\flat}{})=( \tilt{\hh}{\flat}{} )^F,
\end{gather}
where the tilts of $\h$ are with respect to $R_\i$ and
the tilts of $\hh$ are with respect to the set of simples $\{R_i\}_{i\in\i}$
in the sense of Lemma~\ref{lem:setofsimple}.
\ends
\end{proposition}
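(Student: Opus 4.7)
The plan is to proceed by induction on the distance from the standard heart $\gs$ in $\EGp\D(\S)$. For the base case $\h=\gs$, Proposition~\ref{pp:Theta} gives $\Phi(\gs)=(\gq)^F$ with the labelling $R_\i:=T_\i$ and $R_i:=T_i$, so that $\Phi(R_\i)=\bigoplus_{i\in\i}R_i$ by Proposition~\ref{pp:Theta}, and condition (2) holds because the $T_i$ are pairwise distinct simples of the standard heart $\gq\cong\hq$.

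For the inductive step, assume (1)--(3) hold for $\h$ with $\Phi(\h)=\hh^F$, and let $\h'=\tilt{\h}{\sharp}{R_\i}$ be an adjacent heart. By condition (2), the collection $\{R_i\}_{i\in\i}$ consists of rigid simples of $\hh$ with pairwise vanishing $\Hom^\bullet$, so Lemma~\ref{lem:setofsimple} yields a well-defined heart $\hh':=\tilt{\hh}{\sharp}{R_i\mid i\in\i}\in\EGp\D(Q)$, independent of the order of the simple tilts involved. Since $\Phi(R_\i)=\bigoplus_{i\in\i}R_i$ is F-stable, the Frobenius $F$ permutes the set $\{R_i\}_{i\in\i}$ inside $\Sim\hh$; by the order-independence in Lemma~\ref{lem:setofsimple} this forces $F(\hh')=\hh'$, and Lemma~\ref{lem:F-heart} then provides $(\hh')^F$ as a heart in $\D(Q)^F$.

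The core of the proof is to identify $\Sim\hh'$ explicitly via iterated application of the forward-tilt formulas of Proposition~\ref{pp:fini}, and to carry out the parallel computation of $\Sim\h'$ on the $\D(\S)$ side. Each new simple is expressed as a cone assembled from the old simples and the $\Hom^\bullet$-spaces between them. Applying $\Phi$ to the $\D(\S)$-side expressions and invoking the inductive correspondence $\Phi(R_\j)=\bigoplus_{j\in\j}R_j$, together with the fact that $\Phi$ commutes with cones and finite direct sums, produces relabellings $\Sim\hh'=\{R'_j\}_{j\in Q_0}$ and $\Sim\h'=\{R'_\j\}_{\j\in\S_0}$ with $\Phi(R'_\j)=\bigoplus_{j\in\j}R'_j$. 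This establishes (1) for the pair $(\h',\hh')$ and simultaneously matches the F-stabilization, so that $\Phi(\h')=(\hh')^F$. Condition (2) for the new simples in the tilted orbit $\i$ is then pulled back from rigidity and simplicity of $R'_\i\in\Sim\h'$ via faithfulness of $\Phi$ (Proposition~\ref{pp:Theta}), while for the untouched orbits it is preserved from $\h$. The backward tilt $\tilt{\h}{\flat}{R_\i}$ is handled by the symmetric argument.

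The main obstacle is the combinatorial bookkeeping in this inductive step: one must track how Lemma~\ref{lem:setofsimple} reshapes the simples and their hom-spaces, and verify that both the F-orbit partition and the orthogonality condition (2) pass to the tilted heart. Once this is checked for a single simple tilt in either direction, connectedness of $\EGp\D(\S)$ by simple tilts closes the induction and yields the proposition.
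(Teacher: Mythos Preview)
Your inductive strategy is the same as the paper's: start from the standard hearts, and propagate $1^\circ$--$3^\circ$ along simple tilts. Two points deserve correction or sharpening.

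First, a reference slip: for the base case you invoke Proposition~\ref{pp:Theta}, but that concerns $\Theta\colon\D(\ss)\to\D(\qq)^F$. Here you are working with $\Phi\colon\D(\S)\to\D(Q)^F$, and the base case is the equivalence \eqref{eq:Phi} together with $\Phi(S_\i)=\bigoplus_{i\in\i}S_i$.

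Second, and more substantively, the passage ``applying $\Phi$ to the $\D(\S)$-side expressions \ldots\ together with the fact that $\Phi$ commutes with cones and finite direct sums, produces relabellings'' hides the actual content of the proof. The paper does not rely on an abstract compatibility argument here; it carries out the computation explicitly. On the $\D(Q)$ side, iterating $\psi^\sharp_{R_1},\ldots,\psi^\sharp_{R_{|\i|}}$ on a simple $R_{j'}$ with $j'\in\j$ yields a triangle whose middle term is $\widetilde{R}_\i^{\,t\cdot h}$, with $t=|\j|/\gcd(|\i|,|\j|)$ and $h=\sum_k\dim\Ext^1(R_{k'},R_0)$. On the $\D(Q)^F$ side, the single tilt $\psi^\sharp_{\widetilde{R}_\i}$ applied to $\widetilde{R}_\j$ produces a triangle whose middle term is $\widetilde{R}_\i$ tensored over $\End(\widetilde{R}_\i)\cong\F_{q^{|\i|}}$ with $\Ext^1(\widetilde{R}_\j,\widetilde{R}_\i)\cong\F_{q^{|\i|\cdot t\cdot h}}$, giving the same multiplicity $t\cdot h$. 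Matching these two triangles is the heart of \eqref{eq:tilts x}; your proposal correctly names this ``the main obstacle'' but does not perform it.

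Finally, your claim that condition $2^\circ$ ``for the untouched orbits is preserved from $\h$'' is imprecise: the simples in an orbit $\j\neq\i$ do change under the tilt (to $\Psi(R_{j'})$), so their mutual $\Hom^\bullet$-orthogonality is not literally inherited. In the paper this, like $1^\circ$, is read off from the explicit description of the new simples once \eqref{eq:RX} is established.
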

\begin{proof}
We use induction starting from the standard hearts
$\Phi(\hs)=\hq^F$ satisfying $1^\circ$ and $2^\circ$.
We only need to show that, if $(\h,\hh)$ satisfy $\Phi(\h)=\hh^F$, $1^\circ$ and $2^\circ$,
then they also satisfy $3^\circ$ and hearts in \eqref{eq:tilts x} satisfy $1^\circ$ and $2^\circ$.

Let $(\h,\hh)$ satisfy $\Phi(\h)=\hh^F$, $1^\circ$ and $2^\circ$.
Consider a fixed $\i\in\S_0$.
Suppose that the orbit $\i$ contains vertices $1,\ldots,|\i|$ in $Q_0$.
We claim that
\begin{gather}
\label{eq:pp:F}
\tilt{(\widetilde{\h}^F)}{\sharp}{\widetilde{R}_{\i}}=
    \left(\tilt{\widetilde{\h}}{\sharp}{R_1,\ldots,R_{|\i|}}\right)^F,
\end{gather}
where $\widetilde{R_{\i}}=\widetilde{R_i}=\bigoplus_{i\in\i}R_i$.
Because of $2^\circ$,
RHS of \eqref{eq:pp:F} is well-defined in the sense of Lemma~\ref{lem:setofsimple}.
Since the simples determine a heart,
\eqref{eq:pp:F} is equivalent to the equality between their sets of simples.
Let $\j\in\S_0$ and contains vertices $1',\ldots,|\j|'$ in $Q_0$ with corresponding simples
$R_{j'}\in\Sim\h$. Let $\widetilde{R_{\j}}=\widetilde{R_{j'}}=\bigoplus_{j'\in\j}R_{j'}$.
By formulae in Proposition~\ref{pp:fini}, we only need to show
\begin{gather}\label{eq:RX}
    \tilt{\psi}{\sharp}{\widetilde{R_{\i}}}(\widetilde{R_{\j}})
    =\bigoplus_{j=1}^{|\j|} \Psi(R_{j'})
\end{gather}
where $\Psi=\tilt{\psi}{\sharp}{R_1}{\circ\cdots\circ\tilt{\psi}{\sharp}{R_{|\i|}}}$
and $\tilt{\psi}{\sharp}{}$ is defined as in \eqref{eq:psi+}.

Let $d=\gcd(|\i|, |\j|)$ and $|\i|=sd$, $|\j|=td$ for some integers $s,t$.
Without lose of generality, suppose that $F(R_{k})=R_{k+1}$ and $F(R_{k'})=R_{(k+1)'}$,
where $R_{k+sd}=R_{k}$ and $R_{(k+td)'}=R_{k'}$.
Further, suppose that
\[\Ext^1_{\D(Q)}(R_{k'}, R_{1})=\k^{h_{k}}\]
for $k=1,\ldots, d$.
Then, by applying the Frobenius functor, we have
\[\Ext^1_{\D(Q)}(R_{j'}, R_{i})=\k^{h_{(j-i+1)}},\]
where $h_{(x+d)}=h_x$ for any $x\in\kong{Z}$.
Using formula \eqref{eq:psi+},
a direct calculation shows that $\Psi(R_{j'})$ admit a filtration of triangles in $\D(Q)$,
with factors
\[
    R_1^{h_{j}} ,\ldots, R_{sd}^{h_{(j-sd+1)}} ,R_{j'},
\]
for any $1\leq j\leq td$, where $R^h$ means a direct sum of $h$ copies of $R$.
Noticing that $\Hom^\bullet(R_{i_1},R_{i_2})=0$ for different $i_1, i_2$, we actually have triangles
\[
    R_{j'}[-1] \to  \bigoplus_{i=1}^{sd} R_{i}^{h_{(j-i+1)}} \to \Psi(R_{j'}) \to R_{j'}.
\]
Direct summing these triangles gives a triangle
\begin{gather}\label{eq:psi x1}
    \widetilde{R}_{\j}[-1] \xrightarrow{\alpha}  \widetilde{R}_{\i}^{t\cdot h}
        \to \bigoplus_{j=1}^{|\j|} \Psi(R_{j'}) \to \widetilde{R}_{\j},
\end{gather}
in $\D(Q)$, where $h=\sum_{k=1}^d h_k$.
By the definition of $\tilt{\psi}{\sharp}{R_i}$,
$\alpha$ in \eqref{eq:psi x1} contains all maps in
$\Hom_{\D(Q)}(\widetilde{R}_{\j}[-1],\widetilde{R}_{\i}^{t\cdot h})$.

On the other hand, recall that morphisms in $D(Q)^F$ are those in $\D(Q)$ which commute with $F$, and we have
\begin{gather*}
    \Ext_{D(Q)^F}^1(\widetilde{R}_{\j}, \widetilde{R}_{\i})=
        \F_{q^{ t\cdot s\cdot d\cdot h}},\quad
    \End_{D(Q)^F}(\widetilde{R}_{\i}, \widetilde{R}_{\i})=\F_{q^{s\cdot d}}.
\end{gather*}
Then \eqref{eq:psi+} gives a triangle
\begin{gather}\label{eq:psi x2}
    \widetilde{R}_{\j}[-1] \xrightarrow{\alpha'}  \widetilde{R}_{\i}^{t\cdot h} \to
    \tilt{\psi}{\sharp}{\widetilde{R}_{\i}}(\widetilde{R}_{\j}) \to \widetilde{R}_{\j}
\end{gather}
in $\D(Q)^F$, where $\alpha'$ is the universal map.
Therefore \eqref{eq:psi x2} in $\D(Q)^F$ is induced from \eqref{eq:psi x1} in $\D(Q)$,
which implies \eqref{eq:RX}. Similarly for the case of backward tilting.

Via $\Phi$ in \eqref{eq:Phi}, we see that $(\hh,\h)$ satisfy $3^\circ$
and the hearts in \eqref{eq:tilts x} satisfy $1^\circ$ and $2^\circ$ as required.
\end{proof}

We have the following main result of this subsection.
\begin{theorem}\label{thm:stable1}
If $(Q,\S)$ is one of the Dynkin type in Example~\ref{ex:Dynkin},
then $\tau_Q$ in \eqref{eq:iota1} is an isomorphism.
In particular, we have a biholomorphism $\FS\D(Q)\cong\Stab\D(\S)$ , with $\FS\D(Q)$ as a sub-manifold of $\Stab\D(Q)$.
\end{theorem}
\begin{proof}
It is proved in \cite[Appendix~A]{Q} (also cf. \cite{KV}) that $\EG\D(Q)$ is connected if $Q$ is of Dynkin type. By using the same argument, $\EG\D(\S)$ is also connected if $\S$ is of Dynkin type, that is we have $\EG\D(\S)=\EG(\D(\S),\h_\S)$.

For the first claim, we only need to show that $\tau_Q$ is surjective, or equivalently,
that any heart in $\D(Q)^F$ is the F-stabilization of some heart $\h$ in $\D(Q)$.
This follows from Proposition~\ref{pp:F}, noticing that we have the isomorphism $\Phi\colon\EG\D(\S)\to\EG\D(Q)^F$ and the connectedness of graphs $\EG\D(Q)$ and $\EG\D(\S)$.

Then by the derived equivalence \eqref{eq:Phi} and the first claim, we have an isomorphism $\tau_Q: \FS\D(Q)\cong\Stab\D(\S)$. Further, this is a biholomorphism since the charge maps are local homeomorphisms which are compatible with $\tau_Q$, see diagram \eqref{eq:commdia}. So $\FS\D(Q)$ is a sub-manifold of $\Stab\D(Q)$.
\end{proof}

We have the following immediate corollary, which is a partial converse of Lemma~\ref{lem:F-heart}.
\begin{corollary}\label{cor:inverse of the lemma}
If $(Q,\S)$ is of Dynkin type,
then any heart in $\D(Q)^F$ is the $F$-stabilization of some $F$-stable heart of $\D(Q)$.
\end{corollary}

\section{$F$-stability for finite dimensional category}\label{Sec:Folding2}
We proceed to discuss $F$-stable hearts and stability conditions in $\D(\qq)$.
Notice that formulae \eqref{eq:psi+} and \eqref{eq:psi-}
coincide with twist functor formulae (cf. \cite[Remark~7.1]{KQ}).
Hence, similar to the proof of \eqref{eq:RX}, we have the following lemma.

\begin{lemma}
\label{lem:Br}
Let the orbit $\i\in\S_0$ consists of vertices $1,\ldots,|\i|$ in $Q_0$.
Then the auto-equivalence
\[\phi_{\i}=\phi_{T_1}\circ\cdots\circ\phi_{T_{|\i|}}\]
preserves $F$-stable objects in $\D(\qq)$
and hence induces an auto-equivalence $\phi_{\i}$ on $\D(\qq)^F$.
Moreover, under the derived equivalence \eqref{eq:Theta},
$\phi_{T_{\i}}$ corresponds to $\phi_{\i}$.
\end{lemma}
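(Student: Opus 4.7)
The plan is to reduce Lemma~\ref{lem:Br} to the computation already carried out for \eqref{eq:RX} in Proposition~\ref{pp:F}, exploiting the crucial coincidence (noted just before the lemma) between the simple tilting formula \eqref{eq:psi+} and the twist functor formula in the Calabi--Yau category. A preliminary observation is that for distinct $i_1, i_2$ in the same $\sigma$-orbit $\i$, admissibility of $\sigma$ forbids arrows between $i_1$ and $i_2$, so $\Hom^\bullet(T_{i_1}, T_{i_2}) = 0$. Consequently the twist functors $\{\phi_{T_i}\}_{i \in \i}$ pairwise commute, and $\phi_\i$ is well-defined independently of the ordering of the factors.

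First I would verify that $\phi_\i$ commutes with the Frobenius functor $F$ on $\D(\qq)$. Since $F$ cyclically permutes $\{T_i\}_{i \in \i}$, the standard conjugation identity $F \circ \phi_{T_k} \circ F^{-1} \cong \phi_{F(T_k)} = \phi_{T_{\sigma(k)}}$ holds. Applying this factor-by-factor to $\phi_\i = \phi_{T_1} \circ \cdots \circ \phi_{T_{|\i|}}$ yields the same set of commuting twists in cyclically shifted order, whose product is again $\phi_\i$. Therefore $\phi_\i$ preserves F-stable objects and descends to an auto-equivalence of $\D(\qq)^F$.

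For the identification $\Theta \circ \phi_{T_\i} \cong \phi_\i \circ \Theta$ under \eqref{eq:Theta}, it suffices to check equality on the generators $T_\j$ of $\gs$, since $\Theta(T_\j) = \widetilde{T_\j}$ and both sides are exact. I would compute $\phi_\i(\widetilde{T_\j})$ by direct-summing the defining triangles of the iterated twists $\phi_{T_1} \circ \cdots \circ \phi_{T_{|\i|}}(T_{j'})$ for $j' \in \j$ in $\D(\qq)$; the resulting triangle is of exactly the form \eqref{eq:psi x1}. On the other hand, $\Theta(\phi_{T_\i}(T_\j))$ fits in the triangle \eqref{eq:psi x2} in $\D(\qq)^F$, built from a universal $\Ext^1$ over $\F_q$. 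The argument of Proposition~\ref{pp:F} showing that \eqref{eq:psi x2} is induced from \eqref{eq:psi x1} via the dimension identities for $\Ext^1_{\D(\qq)^F}$ and $\End_{\D(\qq)^F}$ applies verbatim, since only the formal shape of the triangle is used, not its origin as a simple tilt.

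The main obstacle is therefore purely the bookkeeping of universal maps: one needs that direct-summing the universal extensions produced in $\D(\qq)$ realizes the universal extension in $\D(\qq)^F$ after passing to $\F_q$-coefficients. This is precisely what the $\Ext^1$-dimension count at the end of the proof of Proposition~\ref{pp:F} establishes, and no additional ingredient is needed; the identification then extends from the simples $T_\j$ to all of $\D(\ss)$ by triangulatedness.
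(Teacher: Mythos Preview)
Your proposal is correct and matches the paper's approach: the paper simply observes that the twist functor formula coincides with \eqref{eq:psi+} and defers to the computation of \eqref{eq:RX} in Proposition~\ref{pp:F}, which is precisely what you unpack. Your explicit conjugation argument $F \circ \phi_{T_k} \circ F^{-1} \cong \phi_{T_{\sigma(k)}}$ is a clean way to isolate the F-stability step that the paper leaves implicit in its one-line reduction.
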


Denote by $\Br\qq^F$ the subgroup of $\Br\qq$ generating by $\{\phi_{\i}\}_{\i\in\S_0}$.
Hence, by abuse of notation $\Theta$ in diagram \eqref{eq:diagramed}, we have
\[\Theta(\Br\ss)=\Br\qq^F.\]
Recall that the derived equivalence \eqref{eq:Theta} induces an
isomorphism $\Theta\colon\EG\D(\ss)\to\EG\D(\qq)^F$.
Now we prove a similar result to Proposition~\ref{pp:F} for $\D(\qq)$.

\begin{proposition}
\label{pp:F CY}
Any heart in $\Theta(\EG(\D(\ss),\gs))$ is an F-stabilization of some heart in $\EG(\D(\qq),\gq)$.
\end{proposition}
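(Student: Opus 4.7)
The plan is to induct along simple tiltings in $\EGp\D(\ss)$, transporting each simple tilt on the $\ss$-side to a compound orbit-wise tilt on the $\qq$-side via the twist-functor dictionary of Lemma~\ref{lem:Br}; this parallels Proposition~\ref{pp:F} in the bounded setting.

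For the base case, take $\h=\gs$. By Proposition~\ref{pp:Theta}, $\Theta(T_\i)=\widetilde{T_i}=\bigoplus_{i\in\i}T_i$, so $\Theta(\gs)$ is generated by F-stable simples and equals the F-stabilization $(\gq)^F$ of the standard heart $\gq\in\EGp\D(\qq)$. This supplies the initial compatible data: orbit-indexed simples on both sides together with vanishing of $\Hom^\bullet(T_{i_1},T_{i_2})$ for distinct $i_1,i_2$ in the same orbit, which play the role of conditions $1^\circ$ and $2^\circ$ of Proposition~\ref{pp:F}.

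For the inductive step, suppose $\h\in\EGp\D(\ss)$ and $\hh\in\EGp\D(\qq)$ satisfy $\Theta(\h)=\hh^F$ with $\Sim\h=\{R_\i\}$, $\Sim\hh=\{R_i\}$, $\Theta(R_\i)=\bigoplus_{i\in\i}R_i$, and $\Hom^\bullet(R_{i_1},R_{i_2})=0$ for distinct $i_1,i_2\in\i$. Fix $\i\in\S_0$. In the Calabi--Yau setting the simple tilt $\tilt{\h}{\sharp}{R_\i}$ is realised by a twist functor applied to $\h$ (cf.\ \cite[Remark~7.2]{KQ}), while the compound tilt $\tilt{\hh}{\sharp}{R_1,\ldots,R_{|\i|}}$ from Lemma~\ref{lem:setofsimple}---well-defined precisely by the intra-orbit vanishing hypothesis---is realised by the composite $\phi_{R_1}\circ\cdots\circ\phi_{R_{|\i|}}$. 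By the same mechanism as Lemma~\ref{lem:Br}, whose argument depends only on the F-orbit structure of the chosen simples rather than on them being the standard ones, these two functors correspond under $\Theta$; hence $\Theta(\tilt{\h}{\sharp}{R_\i})=(\tilt{\hh}{\sharp}{R_1,\ldots,R_{|\i|}})^F$. The tilted heart on the $\qq$-side lies in $\EGp\D(\qq)$ since it is reached from $\hh$ by a finite sequence of simple tilts, and it is F-stable because $F$ cyclically permutes $\{R_i\}_{i\in\i}$, making the composite of twists F-equivariant. The backward tilt case is entirely symmetric.

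The main obstacle is the extension of Lemma~\ref{lem:Br} from the standard simples to an arbitrary compatible family of rigid simples $\{R_i\}$ arising along the induction, together with the verification that the new pair of hearts again satisfies the hypotheses: the new simples are the $\phi$-images of the old, their orbit structure under $F$ is preserved because the composite twist is F-equivariant, and the intra-orbit $\Hom^\bullet$-vanishing persists since it is preserved by any auto-equivalence commuting with $F$. Once these are in place, the conclusion follows by induction on the distance from $\gs$ in $\EGp\D(\ss)$.
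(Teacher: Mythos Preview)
Your inductive strategy parallels Proposition~\ref{pp:F}, but the paper does \emph{not} redo that argument in the Calabi--Yau setting. Instead it proceeds in two steps. First, for hearts in the fundamental domain $\Theta(\EGp(\D(\ss),\gs))$, it uses the L-immersions and the commutative square \eqref{eq:diagramed} to reduce directly to Proposition~\ref{pp:F}: any such heart is $\Theta({\imm_\S}_*(\h))$ for some $\h\in\EGp(\D(\S),\hs)$, and then
\[
\Theta({\imm_\S}_*(\h))={\imm_Q}_*(\Phi(\h))={\imm_Q}_*(\hh^F)=\bigl({\imm_Q}_*(\hh)\bigr)^F,
\]
the last equality because the quotient $\qq\to\k Q$ commutes with the Frobenius morphisms. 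Second, it extends from the fundamental domain to all of $\EGp\D(\ss)$ using \eqref{eq:br2} together with Lemma~\ref{lem:Br}, which says precisely that $\Br\qq^F$ preserves F-stabilization. This avoids any direct tilt computation in $\D(\qq)$.

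Your argument also has a genuine gap. In the CY-$3$ setting a simple forward tilt at $R_\i$ is \emph{not} realised by the spherical twist $\phi_{R_\i}$ applied to the heart. The tilt formula \eqref{eq:psi+} for $\psi^\sharp_{R_\i}(R_\j)$ involves only $\Ext^1(R_\j,R_\i)$, whereas $\phi_{R_\i}(R_\j)$ involves the full $\Hom^\bullet(R_\j,R_\i)$, and Serre duality forces $\Ext^2(R_\j,R_\i)\cong\Ext^1(R_\i,R_\j)^*$, which is typically nonzero. So the identification ``tilt $=$ twist on hearts'' fails, and with it your justification that the intra-orbit $\Hom^\bullet$-vanishing persists (which you deduce from the twist being an auto-equivalence). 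The paper's remark that \eqref{eq:psi+}--\eqref{eq:psi-} ``coincide with twist functor formulae'' means only that the cone formulae have the same shape, so that the computation behind \eqref{eq:RX} transfers to prove Lemma~\ref{lem:Br} for twist functors; it does not assert that tilts and twists agree on hearts. If you want to salvage the direct-induction route, drop twist functors and rerun the computation of \eqref{eq:RX} verbatim using only the $\psi$-formulae; but then you must verify conditions $1^\circ,2^\circ$ by hand at each step, and even so this only reaches hearts in the interval $\EGp(\D(\qq),\gq)$ --- the braid-group extension step is still required.
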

\begin{proof}
First, by \eqref{eq:L_S}, for any heart $\widehat\h$ in $\EG_3(\D(\ss), \gs)$,
there exists $\h$ in $\EG_3(\D(\S), \hs)$ such that $\widehat\h={\imm_{\S}}_*(\h)$.
Then by Proposition~\ref{pp:F}, $\Phi(\h)=\hh^F$, for some $\hh\in\EG\D(Q)$.
Moreover, by looking at the homology of $\hh$ with respect to $\hq$ (cf. \cite[Lemma~5.6]{KQ}),
$\hh$ is actually in $\EG_3(\D(Q),\hq)$.
Further, the quotient map $\qq\to\k Q$, which induces the immersion $\imm_Q$,
commutes with the Frobenius morphisms on $\k Q$ and $\qq$.
Therefore we have
${\imm_Q}_*(\hh^F)=\left({\imm_Q}_*(\hh)\right)^F$.
Together, we have
\[
    \Theta(\widehat\h)={\imm_Q}_*(\Phi(\h))={\imm_Q}_*(\hh^F)=({\imm_Q}_*(\hh))^F,
\]
i.e. $\Theta(\widehat\h)$ is the F-stabilization of some heart in $\EG_3(\Gamma Q,\hq)$.

By Lemma~\ref{lem:Br}, we know that $\Br\qq^F$ preserves F-stability.
By \eqref{eq:br2} we know that $\EG_3((\D(\qq)^F),\h_Q^F)$ is a fundamental domain for
$\EG((\D(\qq)^F),\h_Q^F)/\Br\qq^F$.
Thus each heart in $\EG(\D(\ss),\gs)$ is the F-stabilization of
some heart in $\EG(\D(\qq),\gq)$, as required.
\end{proof}

Similar to \cite[Corollary~5.3]{Q}, there are principal components
\begin{gather*}
    \Stap\D(\qq)=\bigcup{}_{\h\in\EG(\D(\qq),\gq)}\cub(\h),\\
    \Stap\D(\ss)=\bigcup{}_{\h\in\EG(\D(\ss),\gs)}\cub(\h)
\end{gather*}
in $\Stab\D(\qq)$ and $\Stab\D(\ss)$, respectively.
Denote by $\FStap\D(\qq)$ the subspace in $\Stap\D(\qq)$ consisting of $F$-stable stability conditions.
As in Theorem~\ref{cor:stab},
we have an immediate consequence of Proposition~\ref{pp:F CY}.

\begin{theorem}
\label{cor:stab CY}
If $(Q,\S)$ is one of the Dynkin types in Example~\ref{ex:Dynkin},
then there is a canonical isomorphism
\begin{gather}
\label{eq:iota2}
    \iota_{\qq}\colon \FStap\D(\qq) \cong \Stap\D(\qq)^F.
\end{gather}
Thus we have a biholomorphism $\FStap\D(\qq)\cong\Stap\D(\Gamma\S)$.
\end{theorem}
\begin{proof}
The map $\iota_{\qq}$ is constructed via the canonical isomorphism between
the Grothendieck groups of $\K^F(\D(\qq))$ and $\K(\D(\qq)^F)$
, cf. the following commutative diagram
\begin{gather}\label{eq:commdia2}
\xymatrix{
    \FStap\D(\qq)  \ar@{->}[d]\ar[r]^{\iota_{\qq}}  & \Stap\D(\qq)^F  \ar@{->}[d] \\
    \Hom(\K^F(\D(\qq)), \kong{C}) \ar@{-}[r]^{\cong} &  \Hom(\K(\D(\qq)^F), \kong{C}).
}\end{gather}
To prove $\iota_{\qq}$ is an isomorphism, we only need to show that it is surjective, or equivalently,
that any heart in $\EG(\D(\qq)^F,\gq)$ is the F-stabilization of some heart in $\EG(\D(\qq),\gq)$.
This follows from Proposition \ref{pp:F CY}.

The isomorphism $\FStap\D(\qq) \cong \Stap\D(\Gamma\S)$
follows immediately from the derived equivalence \eqref{eq:Theta} and the first claim. Similar to the proof of Theorem \ref{thm:stable1}, it is a biholomorphism.
\end{proof}

\section{Numerical stability conditions}
In this section, we study the space of numerical stability condition of $\D(\qq)$ via
the stability conditions of $\D(\ss)$, for two special Dynkin types,
namely $(Q,\S)$ is of type $(A_3, B_2)$ or $(D_4, G_2)$. We will show that for the types $A_3$ and $D_4$, the numerical stability conditions exactly coincide with the $F$-stable stability conditions.

Recall that the \emph{Euler form} on the Grothendieck group $\K(\D)$ of a triangulated category $\D$ is defined by
\begin{gather}\label{eq:euler form}
    \chi(M, L)=\sum_{i} (-1)^i dim\Hom^i(M,L),
\end{gather}
for any $M,L\in \D$ (here we abuse notations for elements in the category and in the Grothendieck group).
A numerical stability condition on $\D$ is a stability condition $(\h,Z)$
such that the central change $Z: \K(\D)\to \kong{C}$ factors through
the numerical Grothendieck group $\K(\D)/\mathrm{Z}_{\chi}(\D)$, where
\[
    \mathrm{Z}_{\chi}(\D)=\{X\in \K(\D) \mid \chi(X, Y)=0, \forall Y \in \K(\D) \}.
\]
Denote by $\NS\D$ the space of numerical stability conditions that are in $\Stab\D$.

Let $\NS\D(\qq)$ be the space of numerical stability conditions of $\D(\qq)$.
Denote by $\NSp\D(\qq)$ its principal component, that is, the connected component containing
the numerical stability condition with heart $\gq$.
We have the following main theorem.
\begin{theorem}
\label{thm:main}
For $(Q,\S)$ is of type $(A_3, B_2)$ and $(D_4, G_2)$ as in Example~\ref{ex:Dynkin},
$\NS\D(\qq)$ consists of $\Br\qq/\Br\ss$ many (connected) components,
each of which is isomorphic to
\[
    \NSp\D(\qq)=\FStap\D(\qq)\cong\Stap\D(\ss).
\]
\end{theorem}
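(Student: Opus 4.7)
The plan is to combine the isomorphism $\NSp(\qq) \cong \Stap(\ss)$ (which I expect from folding) with an orbit–stabilizer argument for the action of $\Br\qq$ on $\NS(\qq)$. The core step is the identification $\NSp(\qq) = \Stap^F\D(\qq)$ as subspaces of $\Stap(\qq)$, after which Corollary~\ref{cor:stab CY} supplies the isomorphism with $\Stap(\ss)$, and the enumeration of components follows formally.

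For the identification, the main input is a linear-algebra calculation showing that the radical $\mathrm{Z}_\chi(\qq)$ of the Euler form equals the non-$F$-invariant subspace $(F - \id)(\K(\qq) \otimes \kong{C})$. Since $\D(\qq)$ is 3-Calabi--Yau, $\chi_\qq$ is antisymmetric and equal to $\chi_Q - \chi_Q^T$ under the identification $\K(\qq) \cong \K(Q)$. I would compute this matrix in each case: for $(A_3, B_2)$ with linear orientation, the radical is one-dimensional, spanned by $[S_1] - [S_3]$, which is the $-1$-eigenvector of the involution $F$; for $(D_4, G_2)$ with orientation outward from the central vertex, the radical is the two-dimensional subspace $\{(0, x_1, x_2, x_3) : x_1 + x_2 + x_3 = 0\}$, matching the sum of the two non-trivial $F$-eigenspaces of the order-three permutation. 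It then follows that a central charge $Z$ is numerical iff $Z \circ F = Z$. To upgrade this central-charge equivalence to an equivalence of stability conditions (i.e.\ to also force $F(\h) = \h$), I would combine this with the local homeomorphism of Theorem~\ref{thm:B1} and Proposition~\ref{pp:F CY}, which describes F-stable hearts as F-stabilizations of hearts in $\EGp\D(\qq)$.

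For the global structure of $\NS(\qq)$ under the $\Br\qq$-action, note that spherical twists are isometries of the Euler form, so $\Br\qq$ preserves $\mathrm{Z}_\chi(\qq)$ and permutes the connected components of $\NS(\qq)$, each being mapped homeomorphically to another. Using that every heart in $\EGp\D(\qq)$ lies in the $\Br\qq$-orbit of $\gq$ in the present Dynkin setting, together with the fundamental-domain description from Proposition~\ref{pp:F CY}, I would argue that every numerical heart is a $\Br\qq$-translate of an $F$-stable one, giving $\NS(\qq) = \Br\qq \cdot \NSp(\qq)$. The stabilizer of $\NSp(\qq)$ consists of those $\phi \in \Br\qq$ which commute with $F$, and by Lemma~\ref{lem:Br} together with the identity $\Theta(\Br\ss) = \Br\qq^F$ this subgroup is exactly $\Br\ss$. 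The orbit–stabilizer principle then gives the enumeration of components by $\Br\qq/\Br\ss$, with each component homeomorphic to $\NSp(\qq)$ via a braid group element.

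The main obstacle I anticipate is pinning down the stabilizer as \emph{exactly} $\Br\ss$ rather than something larger: one needs that if $\phi \in \Br\qq$ sends some $F$-stable stability condition to another $F$-stable one, then $\phi$ itself commutes with $F$. This requires promoting a condition on stability data (central charge and heart both $F$-invariant) back to an intertwining relation at the autoequivalence level, using the faithfulness of the braid group action together with the rigidity of principal components in Dynkin type. By comparison, the Euler-form radical computation is routine once set up, and the covering $\NS(\qq) = \Br\qq \cdot \NSp(\qq)$ is a consequence of Proposition~\ref{pp:F CY} combined with the explicit description of $\EGp\D(\qq)$ for the two small Dynkin types at hand.
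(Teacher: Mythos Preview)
Your radical computation is correct and matches the paper's, and the overall architecture---work in a fundamental domain, equate numerical with $F$-stable there, then propagate by $\Br\qq$---is also the paper's. The genuine gap is your ``upgrade'' step: the condition $Z\circ F = Z$ on central charges does \emph{not} by itself force $F(\h)=\h$, and neither the local homeomorphism of Theorem~\ref{thm:B1} nor Proposition~\ref{pp:F CY} supplies this implication (the latter only describes hearts that are already $F$-stable; it says nothing about whether a non-$F$-stable heart can support a numerical charge). The paper closes this gap by brute force: it lists all hearts in the finite fundamental domain $\EGp(\D(\qq),\gq)$ (Figure~\ref{fig:1} in the $A_3$ case) and checks that each non-$F$-stable heart there has a pair of simples such as $T_1$ and $T_3[1]$ whose values under any central charge both lie in the upper half-plane $H$, making $Z(T_1)=Z(T_3)$ impossible. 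This explicit heart-by-heart check is what yields $\hua{S}\cap\NS(\qq)=\hua{S}\cap\Stap^F(\qq)$ on the fundamental domain $\hua{S}$ of \eqref{eq:FD}, and your proposal has no substitute for it.

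A second point your orbit--stabilizer framing skips is the \emph{disconnectedness} of the $\Br\qq$-translates of $\Stap^F(\qq)$ inside $\NS(\qq)$: knowing that the orbit covers $\NS(\qq)$ and computing the stabilizer does not show that the translates form separate components rather than overlapping or abutting. The paper proves $\Stap^F(\qq)$ is closed in $\NS(\qq)$ by a cell-level closure argument: for an $F$-stable heart, $\overline{\cub^F(\h)}$ is confined (via \cite[(3.1)]{Q}) to cells indexed by $\EGp(\D(\qq),\gq)$, while $C_0=\NS(\qq)\setminus\Stap^F(\qq)$ lives in cells indexed by $(\Br\qq\cdot\EG^F)\setminus\EG^F$, and \eqref{eq:br1} forces these index sets to be disjoint. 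Your stabilizer computation, even if completed, would not deliver this topological separation.
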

\begin{proof}
We only deal with the case $(Q,\S)$ of type $(A_3, B_2)$, while the other case is similar.
Without loss of generality, suppose that the labeling and the orientations of $(Q, \S)$ are
\begin{gather}\label{eq:B_2}
    Q\colon \quad
    \xymatrix@R=0.1pc@C=1.7pc{
        & 1\quad \ar@/^/@{<->}[dd]^{\iota}\\
        2 \ar[dr]\ar[ur]\\
        & 3\quad \\
    }
    \qquad\qquad
    \S\colon \quad
    \xymatrix@R=0.1pc@C=1.7pc{
        \\
        \mathbf{2} \ar[r]& \mathbf{1}\\
        _1 & _2
    }
\end{gather}
Recall that $\Sim\gq=\{T_1,T_2,T_3\}$.
By a direct calculation, we know that
\begin{itemize}
\item $F(T_1)=T_3$, $F(T_2)=T_2$, $F(T_3)=T_1$ and hence $F^2=\id$.
Thus a stability condition $(\h,Z)$ is $F$-stable if and only if $\h$ is $F$-stable and
$Z(T_1)=Z(T_3)$.
\item $\mathrm{Z}_\chi(\qq)$ is generated by $[T_1]-[T_3]$.
Thus a stability condition $(\h,Z)$ is numerical if and only if
$Z(T_1)=Z(T_3)$.
\end{itemize}
Clearly, an $F$-stable stability condition is numerical.

Next, we investigate stability conditions in
\begin{gather}
\label{eq:FD}
    \hua{S}:=\bigcup{}_{\h\in\EG_3(\D(\qq),\gq)}\cub(\h).
\end{gather}
The Auslander-Reiten quiver of $\gq$ is as following
\[
\xymatrix@R=.7pc@C=.7pc{
    T_1 \ar[dr] && X_3 \ar[dr] \\
    & X_2 \ar[ur]\ar[dr] && T_2  \\
    T_3 \ar[ur] && X_1 \ar[ur] }
\]
$\EG_3(\D(\qq),\gq)$ is shown in Figure~\ref{fig:1},
where we denote each heart by a complete set of simples.
Note that the $F$-stable ones are underlined.

\begin{figure}\[
{
\xymatrix@R=2pc@C=0.7pc{
    &&_{\{T_1,X_3[1],T_2\}} \ar[drr]  \ar[d] \\
    _{\{T_1,T_3[1],X_3\}} \ar[urr] \ar@{<-}[dd] \ar[dr]
    && _{\{ T_1[1] X_1 X_3[1] \}}   \ar[dr]
    && _{\{T_3[1],T_1,T_2[1]\}}   \ar@{<-}[dr]   \ar[dd]  \\
    & \underline{_{\{T_1[1],X_2,T_3[1]\}}}    \ar[dr]   \ar@{<-}[dd]
    && \underline{_{\{T_2,X_1[1],X_3[1]\}}}   \ar[dr]
    && \underline{_{\{T_1,T_2[1],T_3\}}}   \ar[dd] \\
    \underline{_{\{T_1,T_2,T_3\}} }     \ar[dr] \ar@{-->}[rrrrru]
    && \underline{_{\{X_1,X_2[1],X_3\}} }  \ar[uu]   \ar[dr]
    && \underline{_{\{T_1[1],T_2[1],T_3[1]\}}}   \ar@{<-}[dr] \\
    & _{\{T_3,T_1[1],X_1\}}    \ar[drr]
    && _{\{ X_1[1], X_3, T_3[1] \}}   \ar@{<-}[d] \ar[uu]
    && _{\{T_2[1],T_3,T_1[1]\}}   \\
    &&& _{\{T_2,X_1[1],T_3\}}  \ar@{->}[urr] &
    }}
\]
\caption{The exchange graph $\EG_3(\qq, \gq)$ for $Q$ of type $A_3$}
\label{fig:1}
\end{figure}

Let $\h$ be a non-$F$-stable heart in Figure~\ref{fig:1}.
We claim that all the stability conditions in $\cub(\h)$ are not numerical.
To see this, take the top heart in Figure~\ref{fig:1} for example.
Then $Z(T_1)$ and $Z(T_3[1])$ are in the same upper half plane $H$ as in \eqref{eq:H}.
Thus $Z(T_1)=Z(T_3)$ never holds, which implies the claim. The other cases
are similar.

To sum up, for an $F$-stable heart $\h$ in Figure~\ref{fig:1}, a stability condition with heart $\h$ is numerical if and only if it is $F$-stable, i.e.
\begin{gather}
\label{eq:N=F}
    \cub(\h)\cap\NS\D(\qq)=\cub(\h)\cap\FStap\D(\qq).
\end{gather}
Therefore, we have
\[
    \hua{S}_*:=\hua{S} \cap \NS\D(\qq)=\hua{S} \cap\FStap\D(\qq).
\]
Notice that $\Br\qq^F$ preserves $F$-stable stability conditions
and all auto-equivalence $\Aut\D(\qq)$ preserves numerical stability conditions.
Then by \eqref{eq:br1} we have
\begin{eqnarray*}
    \NS\D(\qq)=\Br\qq\cdot\hua{S}_*.
\end{eqnarray*}
Similarly, by \eqref{eq:br2} and \eqref{eq:Theta} we have
\[     \EG_3(\D(\qq)^F, \gq^F)\cong\EG\D(\qq)^F/\Br{\qq} ^F \]
and hence
\begin{eqnarray*}
    \FStap\D(\qq)=\Br\qq^F\cdot\hua{S}_*.
\end{eqnarray*}
Thus $\NS\D(\qq)$ is the union of $\Br\qq/\Br\qq^F\cong \Br\qq/\Br\ss$ many copies of $\FStap\D(\qq)$.

To finish, we assert that the closure of $\FStap\D(\qq)$, which is taken inside $\Stap\D(\qq)$,
is disjoint from
\[
    C_0:=\NS\D(\qq)-\FStap\D(\qq).
\]
and vice versa.
This will imply that $\FStap\D(\qq)$ is a connected component of $\NS(\qq)$
and hence the theorem follows.

The rest of the proof is devoted to prove the assertion.
Let $\EG^F=\EG(\D(\qq)^F,\gq^F)$
and $\cub^F(\h)=\cub(\h)\cap\FStap\D(\qq)$ for any $\h\in\EG^F$.
First, we have
\[
    \cc{\FStap\D(\qq)}=\bigcup{}_{\h\in\EG^F}  \cc{\cub^F(\h)}
\]
by the local closeness property (\cite[Thm.~A and Lem.~3.26]{QW})
and thus we only need to show that, for any $\h\in\EG^F$,
\begin{gather}
\label{eq:CC}
    \cc{\cub^F(\h)} \cap C_0=\emptyset.
\end{gather}
Without loss of generality, take the $F$-stable heart $\h=\gq[1]$.
By formula \cite[(3.1)]{Q}, we have
\begin{gather}
\label{eq:end1}
    \cc{\cub^F(\gq[1])}\subset\cc{\cub(\gq[1])}
    \subset \bigcup{}_{\h\in\EG_3(\D(\qq),\gq) } \cub(\h).
\end{gather}
We also have
\begin{gather}
\label{eq:end2}
    C_0\subset \bigcup{}_{\h\in\Br\qq\cdot\EG^F-\EG^F} \cub(\h).
\end{gather}
But \eqref{eq:br1} implies that
\[
    \Br\qq\cdot\EG^F\cap\EG(\D(\qq),\gq)=\EG^F\cap\EG(\D(\qq),\gq)\subset\EG^F
\]
and hence
\begin{gather}
\label{eq:end3}
    \left(\Br\qq\cdot\EG^F-\EG^F\right) \cap \EG(\D(\qq),\gq)
    =\emptyset.
\end{gather}
Combine \eqref{eq:end1}, \eqref{eq:end2} and \eqref{eq:end3},
we have \eqref{eq:CC} for $\h=\gq[1]$ as required.
Similarly we can show that ${\cub^F(\h)} \cap \cc{C_0}=\emptyset$,
noticing $C_0$ is the union of many copies of $\cub^F(\h)$
(but the argument is the same
).
\end{proof}

\begin{remark}
If $(Q,\S)$ is of type $(A_3, B_2)$ as in \eqref{eq:B_2},
then $\Br\ss$ satisfies the $B_2$-braid relation, i.e.
\[
    (\mathbf{1}\circ\mathbf{2})^2=\mathbf{1}\circ\mathbf{2}\circ\mathbf{1}\circ\mathbf{2}
    =\mathbf{2}\circ\mathbf{1}\circ\mathbf{2}\circ\mathbf{1}=(\mathbf{2}\circ\mathbf{1})^2,
\]
where $\mathbf{i}$ represent the twist functor of $T_{\i}$ in $\Br\ss$.
This follows by Lemma~\ref{lem:Br} and a direct calculation
\begin{eqnarray*}
     (1\circ3)\circ2\circ(\underline{1\circ3})\circ2
    &=&1\circ\underline{3\circ2\circ3}\circ1\circ2\\
    =1\circ2\circ3\circ\underline{2\circ1\circ2}
    &=&1\circ2\circ\underline{3\circ1}\circ2\circ1\\
    =\underline{1\circ2\circ1}\circ3\circ2\circ1
    &=&2\circ1\circ\underline{2\circ3\circ2}\circ1\\
    =2\circ1\circ3\circ2\circ\underline{3\circ1}
    &=&2\circ(1\circ3)\circ2\circ(1\circ3)
\end{eqnarray*}
where $i$ represent the twist functor of $T_i$ in $\Br\qq\cong\Br_{A_3}$, and we underline the relations.
Thus
\[
    \Br_{B_2}\cong\Br\ss \subset \Br\qq\cong\Br_{A_3}.
\]
Similarly, If $(Q,\S)$ is of type $(D_4, G_2)$,
then $\Br\ss$ satisfies the $G_2$-braid relation, i.e.
\[
    (\mathbf{1}\circ\mathbf{2})^3
    =(\mathbf{2}\circ\mathbf{1})^3
\]
and we have
\[
    \Br_{G_2}\cong\Br\ss \subset \Br\qq\cong\Br_{D_4}
\]
(note that we need the faithfulness \cite[Thm.~B]{QW} of $\Br\qq\cong\Br_{D_4}$
in this case).
\end{remark}

\begin{remark}
Note that for a general Dynkin quiver $Q$, the numerical Grothendieck group $\mathrm{Z}_\chi(\qq)$ does not have to match the Grothendieck group of $\D(\ss)$. Thus the results in this section do not hold in general.
\end{remark}

\section{Stability conditions of Gepner type}\label{subsection:Stability conditions of Gepner type}
In this section, we consider the Gepner type stability conditions on the bounded derived category of a hereditary algebra of Dynkin type.
Firstly we recall some background on the Gepner type stability conditions.

There is a natural $\mathbb{C}$ action
on a space $\Stab(\D)$, namely:
\[
   s \cdot (Z,\hua{P})=(Z \cdot e^{ -\mathbf{i}\pi s},\hua{P}_{\Re(s)}),
\]
where $\hua{P}_x(\phi)=\hua{P}(\phi+x)$.
There is also a natural action on $\Stab(\D)$ induced by $\Aut(\D)$, namely:
$$\Phi  (Z,\hua{P})=(Z \circ \Phi^{-1} ,\Phi\circ \hua{P}).$$
For a pair $(\Phi,s)\in\Aut\D\times\mathbb{C}$, a {\em Gepner equation} on $\Stab\D$ is an equation of the form
\[\Phi(\sigma)=s\cdot\sigma.\]

In the usual setting, where one has $K(\D)\cong\mathbb{Z}^n$, Gepner
equations were studied by Toda \cite{T1,T2,T}, who was interested
in finding stability conditions with a symmetry property.
In this case, a solution to such an equation is called a Gepner point, which is an orbifold point in $\Aut\backslash\Stab\D/\mathbb{C}$.
Toda's motivation came from the study of Donaldson--Thomas invariants, in particular
for B-branes on Landau-Ginzburg models associated to a superpotential.

Following \cite{T}, a stability condition $\sigma$ is said to be of \emph{Gepner type $(\Phi, s)$} for $(\Phi, s)\in\Aut\D\times\mathbb{C}$, if they satisfy the Gepner equation.
In particular, the (n-)shift functor $[n]$ is an auto-equivalence and any stability condition is of Gepner type $([n],n)$, for any integer $n$. We call it the stability condition of trivial Gepner type.

We also mention the global dimension function on stability conditions,
which is closely related to Gepner equations, which is introduced in \cite{Q3}.
For a stability condition $\sigma=(Z, \hua{P})$, define
\begin{gather}\label{eq:geq}
\gldim\sigma=\sup\{ \phi_2-\phi_1 \mid
    \Hom(\hua{P}(\phi_1),\hua{P}(\phi_2))\neq0\}.
\end{gather}

In the following we assume $(Q,\S)$ is of Dynkin type. We list the Coxeter number $h$ of the Dynkin types in Table \ref{table:Coxeter number}, which will be used later.
\begin{table}[ht]
\begin{equation*}
\begin{array}{cc}
\textrm{Dynkin type} & \textrm{Coxeter number $h$}\\
\hline
A_{2n-1} & 2n \\
 B_n & 2n \\
 C_n & 2n \\
 D_{n+1} & 2n\\
E_6 & 12\\
F_4  & 12\\
G_2 & 6\\
\end{array}
\end{equation*}
\smallskip
\caption{The Coxeter numbers}
\label{table:Coxeter number}
\end{table}

Recall that one may associate to the module category $\h_Q$ a quiver, the Auslander-Reiten quiver $\Delta(\h_Q)$, whose vertices are labeled by the isomorphism classes of indecomposable modules and arrows are labeled by irreducible morphisms between the modules. For $\h_\S$, since $\F_q$ is not algebraically-closed, the Auslander-Reiten quiver $\Delta(\h_\S)$ is a species, which is folded from $\Delta(\h_Q)$ by an (admissible) automorphism $\overline{\iota}$ on $\Delta(\h_Q)$, where $\overline{\iota}$ is induced by the Frobenius morphism on $\h_Q$, see \cite[section 8]{DD1} for details.
Further, one may define the Auslander-Reiten quivers $\Delta(\D(Q))$ and $\Delta(\D(\S))$ of the bounded derived categories $\D(Q)$ and $\D(\S)$ respectively, which are spliced from $\Delta(\h_Q)$ and $\Delta(\h_\S)$ respectively.
Moreover, noticing that $\S$ is the folding of $Q$ by the admissible automorphism $\iota\in \Aut Q$ in Example \ref{ex:Dynkin}, we have a canonical exact sequence
\begin{gather}\label{eq:exact-seq1}
1 \longrightarrow \Aut Q \longrightarrow \Aut\Delta(\D(Q)) \longrightarrow \Aut\Delta(\D(\S)) \longrightarrow 1
\end{gather}
of automorphism groups of these quivers.

Note that a triangle-auto-equivalence of $\D(Q)$ (resp. $\D(\S)$) preserves the irreducible morphisms and
the Auslander-Reiten translation, thus it induces an automorphism of $\Delta(\D(Q))$ (resp. $\Delta(\D(\S))$).
So there is a homomorphism $\Aut\D(Q)\rightarrow \Aut\Delta(\D(Q))$ which is easily seen to be injective.
In particular, the Auslander-Reiten translation $\tau$ and the Frobenius functor $F$ are both triangle-auto-equivalences of $\D(Q)$, which induce two automorphisms on $\Delta(\D(Q))$, and in fact, any automorphism of $\Delta(\D(Q))$ is generated by them, excepting type $D_4$. For $D_4$ type, there are other auto-equivalences induced by the symmetries of the quiver, which also induce isomorphisms of $\Delta(\D(Q))$. So $\Aut\D(Q)\cong \Aut\Delta(\D(Q))$, and thus also $\Aut\D(\S)\cong \Aut\Delta(\D(\S))$. Further by
sequence \eqref{eq:exact-seq1}, the following canonical sequence is exact
\begin{gather}\label{eq:exact-seq2}
1 \longrightarrow \Aut Q \longrightarrow \Aut\D(Q) \longrightarrow \Aut\D(\S) \longrightarrow 1.
\end{gather}
Note that $\tau$ and $F$ commute, so for $Q$ is not of $D_4$ type, we have
\begin{equation}\label{eq:auto group}
    \Aut\D(Q)=\<\tau,F\>\cong \Aut\Delta(\D(Q))\cong
        \mathbb{Z} \times \mathbb{Z}_2.
\end{equation}
When $Q$ is of $D_4$ type, we have
\begin{equation}\label{eq:auto group3}
\Aut\D(Q)\cong \Aut\Delta(\D(Q))\cong
\mathbb{Z} \times S_3.
\end{equation}
By \eqref{eq:exact-seq2}, \eqref{eq:auto group} and \eqref{eq:auto group3}, we have
\begin{equation}\label{eq:auto group2}
\Aut\D(\S)=\<\tau\>\cong\mathbb{Z}.
\end{equation}

By the definition of the Gepner type stability condition,
it is not hard to see that a stability condition on $\D(Q)$ (of any quiver $Q$, not necessarily of Dynkin type) is $F$-stable if and only if it is of Gepner type $(F,0)$. Thus we have the following
\begin{proposition}\label{prop:Gepner3}
A stability condition in $\FS \D(Q)$ is of Gepner type $(F,0)$. Conversely, any Gepner type stability condition of $\D(Q)$ with auto-equivalence $F$ belongs to $\FS \D(Q)$.
\end{proposition}

It is proved in \cite{KST} that there is a stability condition $\sigma$ on $\D(Q)$ of Gepner type $(\tau, -2/h)$ (which is conjectured by \cite[Conjecture 1.2]{T1} for general settings) such that
\begin{gather}\label{eq:Gepner}
    \tau(\sigma)=-\frac{2}{h} \cdot \sigma.
\end{gather}
It is also the unique Gepner type stability condition on $\D(Q)$ up to $\mathbb{C}$-action with $\tau$ as the auto-equivalence. In \cite{KST}, $\sigma$ is defined on the homotopy category of graded matrix factorizations, which is equivalent to $\D(Q)$. Here we compute it directly in $\D(Q)$ and then fold it as a Gepner type stability condition in $\D(\S)$.

\begin{proposition}\label{prop:Gepner}
The following maps on the simples of $\h_Q$ induces a function $Z$ from $K(\D(Q))$ to $\mathbb{C}$, which gives a stability condition $(\h_Q,Z)$ on $\D(Q)$. This is the unique one of Gepner type $(\tau, -2/h)$ up to $\mathbb{C}$-action. The orientations and vertex labels of $Q$ is depicted in Figure \ref{fig:orientation}.
\begin{itemize}
\item
$A_{2n-1}$ type:
\[
    \begin{cases}
Z(T_{2m})=-1 & 1 \leq m \leq n-1;\\
Z(T_{2m-1})=\frac{e^{\mathrm{i}\pi/h}}{cos(\pi/h)}& 2 \leq m \leq n-1;\\
Z(T_1)=Z(T_{2n-1})=\frac{e^{\mathrm{i}\pi/h}}{2cos(\pi/h)}. &
\end{cases}
\]

\item
$D_{2n}$ type:
\[
    \begin{cases}
Z(T_{2m})=-1 & 1 \leq m \leq n-1;\\
Z(T_{2m-1})=\frac{e^{\mathrm{i}\pi/h}}{cos(\pi/h)}& 2 \leq m \leq n-1;\\
Z(T_1)=Z(T_{2n-1})=Z(T_{2n})=\frac{e^{\mathrm{i}\pi/h}}{2cos(\pi/h)}. &
\end{cases}
\]

\item
$D_{2n+1}$ type:
\[
    \begin{cases}
Z(T_{2n+1})=-1; &\\
Z(T_{2m})=-1 & 1 \leq m \leq n-1;\\
Z(T_{2m-1})=\frac{3e^{\mathrm{i}\pi/h}}{2cos(\pi/h)}& 2 \leq m \leq n;\\
Z(T_1)=\frac{e^{\mathrm{i}\pi/h}}{2cos(\pi/h)}. &
\end{cases}
\]

\item
$E_6$ type:
\[
    \begin{cases}
Z(T_{2})=Z(T_{4})=Z(T_{5})=-1; &\\
Z(T_1)=Z(T_6)=\frac{e^{\mathrm{i}\pi/h}}{2cos(\pi/h)};&\\
Z(T_{3})=\frac{3e^{\mathrm{i}\pi/h}}{2cos(\pi/h)}. &
\end{cases}
\]
\end{itemize}
\begin{figure}

\[
\quad
    \xymatrix@R=0.3pc@C=1.8pc{
~~A_{2n-1}& ~~1 \ar@{<-}[r]& 2 \ar@{->}[r]& 3 \ar@{<-}[r]& \cdots \ar@{<-}[r]& 2n-2\ar@{->}[r]& 2n-1}
\]

\[
\quad
    \xymatrix@R=0.3pc@C=1.8pc{
        &&&&& 2n-1 &\\
       ~~~~D_{2n}~~~~~& ~~~1 \ar@{<-}[r]& 2 \ar@{->}[r]& \cdots \ar@{<-}[r]& 2n-2 \ar@{->}[dr]\ar@{->}[ur]\\
        &&&&& 2n &\\
    }
\]

\[
\quad
    \xymatrix@R=0.3pc@C=1.8pc{
        &&&&& 2n &\\
       D_{2n+1}& 1 \ar@{<-}[r]& 2 \ar@{->}[r]& \cdots \ar@{<-}[r]& 2n-1 \ar@{->}[dr]\ar@{->}[ur]\\
        &&&&& 2n+1 &\\
    }
\]

\[
\quad
    \xymatrix@R=1.4pc@C=1.8pc{
              ~~E_{6}~~& ~~~1 \ar@{<-}[r]& 2 \ar@{->}[r]& 3\ar@{<-}[r]& 5\ar@{->}[r]& 6&&&\\
&&&4\ar@{->}[u]}
\]
\caption{The orientations of the quivers}
\label{fig:orientation}
\end{figure}
\end{proposition}
\begin{proof}
Note that $Z$ induces a well-defined function on $K(\h_Q)$. Since $\h_Q$ is a finite heart, $(\h_Q,Z)$ is a stability condition on $\D(Q)$.
By considering the Auslander-Reiten quiver of $\D(Q)$, one may directly check that $(\h_Q,Z)$ is of Gepner type $(\tau,-2/h)$, noticing that the value of $Z$ on a simple injective is $-1$. The uniqueness follows from \cite[Theorem 4.7]{Q}.
\end{proof}

\begin{proposition}\label{prop:Gepner2}
The stability condition $(\h_Q,Z)$ in Proposition \ref{prop:Gepner} is $F$-stable, and it induces a stability condition $(\h_\S,\underline{Z})$ on $\D(\S)$, which is of Gepner type $(\tau, -2/h)$, where $\tau$ is the Auslander-Reiten translation of $\D(\S)$ and $h$ is the Coxeter number of $\S$. It is also the unique Gepner type stability condition on $\D(\S)$ up to $\mathbb{C}$-action with $\tau$ as the auto-equivalence.
\begin{itemize}
\item
$C_{n}$ type:
\[
    \begin{cases}
\underline{Z}(S_{m})=-1 &  \text {if $m$ is even;}\\
\underline{Z}(S_{m})=\frac{e^{\mathrm{i}\pi/h}}{cos(\pi/h)}&  \text {if $m$ is odd.}
\end{cases}
\]

\item
$B_{2n-1}$ type:
\[
    \begin{cases}
\underline{Z}(S_{2m})=-1 & 1 \leq m \leq n-1;\\
\underline{Z}(S_{2m-1})=\frac{e^{\mathrm{i}\pi/h}}{cos(\pi/h)}& 2 \leq m \leq n-1;\\
\underline{Z}(S_1)=\underline{Z}(S_{2n-1})=\frac{e^{\mathrm{i}\pi/h}}{2cos(\pi/h)}. &
\end{cases}
\]

\item
$B_{2n}$ type:
\[
    \begin{cases}
\underline{Z}(S_{2m})=-1 & 1 \leq m \leq n;\\
\underline{Z}(S_{2m-1})=\frac{3e^{\mathrm{i}\pi/h}}{2cos(\pi/h)}& 2 \leq m \leq n;\\
\underline{Z}(S_1)=\frac{e^{\mathrm{i}\pi/h}}{2cos(\pi/h)}. &
\end{cases}
\]

\item
$F_4$ type:
\[
    \begin{cases}
\underline{Z}(S_{2})=\underline{Z}(S_{4})=-1 &\\
\underline{Z}(S_1)=\frac{e^{\mathrm{i}\pi/h}}{2cos(\pi/h)};&\\
\underline{Z}(S_{3})=\frac{3e^{\mathrm{i}\pi/h}}{2cos(\pi/h)}. &
\end{cases}
\]

\item
$G_2$ type:
\[
    \begin{cases}
\underline{Z}(S_{2})=-1; \\
\underline{Z}(S_1)=\frac{e^{\mathrm{i}\pi/h}}{2cos(\pi/h)}.
\end{cases}
\]
\end{itemize}
\end{proposition}
\begin{proof}
Note that $\h_Q$ and $Z$ are both $F$-stable, so $(\h_Q,Z)$ is $F$-stable. Then by Theorem \ref{thm:stable1}, it induces a stability condition $(\h_\S,\underline{Z})$ on $\h_\S$.
As shown in Table \ref{table:Coxeter number}, the Coxeter numbers of $Q$ and $\S$ coincide.
On the other hand, the Auslander-Reiten translations of $\h_Q$ and $\h_\S$ also coincide under the equivalence $\h^F_Q\cong \h_\S$.
Then a direct calculation gives the list of $(\h_\S,\underline{Z})$ above, where the index of simple modules in $\h_\S$ is induced from the index of simple modules in $\h_Q$. Further, $(\h_\S,\underline{Z})$ is naturally a stability condition of Gepner type $(\tau,-\frac{2}{h})$, and the uniqueness of $(\h_Q,Z)$ guarantees the uniqueness of $(\h_\S,\underline{Z})$.
\end{proof}

To find all the non-trivial Gepner type stability conditions on $\D(Q)$ of simply-laced Dynkin type (excepting $D_4$ type) up to $\mathbb{C}$-action, by equalities \eqref{eq:auto group} and $\tau^h=[-2]$, we should consider the auto-equivalences $\tau^m, 0 < m < h,$ and $\tau^mF, 0\leq m < h$. For the non-simply-laced case, we only consider the auto-equivalence $\tau^m, 0 < m < h$ by \eqref{eq:auto group2}. Assume $\sigma$ is a stability condition on $\D(Q)$ of Gepner type $(\tau^m, s)$, since $\tau^h=[-2]$, $s$ must be $-\frac{2m}{h}$. When $m$ and $h$ are coprime, it is not hard to see that $\sigma$ is also of Gepner type $(\tau, -\frac{2}{h})$. Otherwise, there are other (and in fact infinite many) stability conditions of Gepner type $(\tau^m, s)$ up to $\mathbb{C}$-action, see the example of $A_5$ type as follows.
\begin{example}\label{ex:Gepner}
We consider $\D(Q)$ of $A_5$ type with $Q$ as in Figure \ref{fig:orientation}. The Auslander-Reiten quiver
of $\h_Q$ is as follows.
\[
\xymatrix@R=.3pc@C=.8pc{
    T_5\ar[dr] && U_5\ar[dr] && V_5\ar[dr]\\
    & U_4 \ar[ur]\ar[dr] && V_4\ar[ur]\ar[dr] &&T_4 \\
    T_3\ar[ur]\ar[dr] && U_3\ar[dr]\ar[ur] && V_3\ar[dr]\ar[ur]\\
    & U_2 \ar[ur]\ar[dr] && V_2\ar[ur]\ar[dr] &&T_2 \\
    T_1\ar[ur] && U_1\ar[ur] && V_1\ar[ur]}
\]
Figure \ref{fig:Gepner}
shows the central charge $Z$ of stability condition $\sigma$ of Gepner type $(\tau,-\frac{2}{h})$, which is given in Proposition \ref{prop:Gepner}. Note that $\sigma$ is also of Gepner type $(\tau^2,-\frac{4}{h})$. Thanks to the mesh relations and the Gepner type relation $Z(\tau(E))=e^{2\mathrm{i}\pi /h}\cdot Z(E)$, $Z$ is determined by the image of one indecomposable object in $\D(Q)$. However, for the case of Gepner type $(\tau^2,-\frac{4}{h})$, we need at least two images. So to construct another stability condition of Gepner type $(\tau^2,-\frac{4}{h})$, for example, let $Z'(T_1)=\frac{6}{5} Z(T_1)$ and $Z'(T_2)=Z(T_2)$. Then $Z'$ depicted in Figure \ref{fig:Gepner2} gives a stability condition $\sigma'$ of Gepner type $(\tau^2,-\frac{4}{h})$ with heart $\tilt{\h_Q}{\flat}{T_4}$.
\begin{figure}[ht]\centering
\begin{tikzpicture}[scale=4]
\draw[-] (-1.5,0) -- (0,0) node[below] {$0$};
\draw[dashed,->,>=latex] (0,0) -- (1.5,0) node[right]{$x$};

\path (0,0.2886751346*2) coordinate (U5);
\draw[thick,->,>=latex] (0,0) -- (U5) node[above]{\bahao $~Z(U_5)~ Z(U_1)$};

\path (0.5,0.2886751346) coordinate (T1);
\draw[thick,->,>=latex] (0,0) -- (T1) node[right]{\bahao $Z(T_1),Z(T_5)$};

\path (-0.5,0.2886751346) coordinate (V5);
\draw[thick,->,>=latex] (0,0) -- (V5) node[left]{\bahao $Z(V_5),Z(V_1)$};

\path (-1,0) coordinate (T2);
\draw[thick,->,>=latex] (0,0) -- (T2) node[below]{{\qihao 1}} node[above]{\bahao $Z(T_2),Z(T_4)$};

\path (-0.5,0.2886751346*3) coordinate (V4);
\draw[thick,->,>=latex] (0,0) -- (V4) node[left]{\bahao $Z(V_4),Z(V_2)$};

\path (0.5,0.2886751346*3) coordinate (U4);
\draw[thick,->,>=latex] (0,0) -- (U4) node[right]{\bahao $Z(U_4),Z(U_2)$};

\path (-1,0.2886751346*2) coordinate (V3);
\draw[thick,->,>=latex] (0,0) -- (V3) node[left]{\bahao $Z(V_3$)};

\path (1,0.2886751346*2) coordinate (T3);
\draw[thick,->,>=latex] (0,0) -- (T3) node[right]{\bahao $Z(T_{3}$)};

\path (0,0.2886751346*4) coordinate (U3);
\draw[thick,->,>=latex] (0,0) -- (U3) node[right]{\bahao $Z(U_3$)};

\draw [dotted] (0,0)--(1.5,0.2886751346*3)node[above]{$\frac{\pi}{6}$};
\draw [dotted] (0,0)--(0.5*1.45,0.2886751346*3*1.45)node[above]{$\frac{2\pi}{6}$};
\draw [dotted] (0,0)--(0,1.5)node[above]{$\frac{3\pi}{6}$};
\draw [dotted] (0,0)--(-0.5*1.45,0.2886751346*3*1.45)node[above]{$\frac{4\pi}{6}$};
\draw [dotted] (0,0)--(-1.5,0.2886751346*3)node[above]{$\frac{5\pi}{6}$};
\draw [dotted] (0.5,0.2886751346)--(0.5,0) node[below]{\qihao $\frac{\sqrt{3}}{6}$};
\end{tikzpicture}
\caption{A central charge of Gepner type $(\tau,-\frac{2}{h})$}\label{fig:Gepner}
\end{figure}

\begin{figure}[ht]\centering
\begin{tikzpicture}[scale=4]
\draw[-] (-1.5,0) -- (0,0) node[below] {$0$};
\draw[dashed,->,>=latex] (0,0) -- (1.5,0) node[right]{$x$};

\path (0.6,0.2886751346*1.2) coordinate (T1);
\draw[thick,->,>=latex] (0,0) -- (T1) node[right]{\bahao $Z'(T_1)$};

\path (-1.1,-0.2886751346*0.6) coordinate (T4);
\draw[dashed] (0,0) -- (T4);

\path (1.1,0.2886751346*0.6) coordinate (T4[1]);
\draw[thick,->,>=latex] (0,0) -- (T4[1]) node[below]{\bahao $Z'(T_{4}[-1])$};

\path (-0.6,0.2886751346*1.2) coordinate (V1);
\draw[thick,->,>=latex] (0,0) -- (V1) node[above]{\bahao $Z'(V_1)$};

\path (-0.1,0.2886751346*4.2) coordinate (U3);
\draw[thick,->,>=latex] (0,0) -- (U3) node[above]{\bahao $Z'(U_3)$};

\path (0.4,0.2886751346*1.2) coordinate (T5);
\draw[thick,->,>=latex] (0,0) -- (T5) node[above]{\bahao $Z'(T_{5})$};

\path (0.4,0.2886751346*3.6) coordinate (U4);
\draw[thick,->,>=latex] (0,0) -- (U4) node[above]{\bahao $Z'(U_4)$};

\path (-0.1,0.2886751346*1.8) coordinate (U1);
\draw[thick,->,>=latex] (0,0) -- (U1) node[left]{\bahao $Z'(U_1)$};

\path (-0.7,0.2886751346*3) coordinate (V2);
\draw[thick,->,>=latex] (0,0) -- (V2) node[above]{\bahao $Z'(V_2)$};

\path (-1.1,0.2886751346*1.8) coordinate (V3);
\draw[thick,->,>=latex] (0,0) -- (V3) node[left]{\bahao $Z'(V_3)$};

\path (0,0.2886751346*2.4) coordinate (U5);
\draw[thick,->,>=latex] (0,0) -- (U5) node[right]{\bahao $~Z'(U_5)$};

\path (-1,0) coordinate (T2);
\draw[thick,->,>=latex] (0,0) -- (T2) node[below]{\qihao 1}node[above]{\bahao $Z'(T_2)$};

\path (-0.5,0.2886751346*3) coordinate (V4);
\draw[thick,->,>=latex] (0,0) -- (V4) node[right]{\bahao $Z'(V_4)$};

\path (-0.5,0.2886751346*0.6) coordinate (V5);
\draw[thick,->,>=latex] (0,0) -- (V5) node[below]{\bahao $Z'(V_5)$};

\path (0.5,0.2886751346*3) coordinate (U2);
\draw[thick,->,>=latex] (0,0) -- (U2) node[right]{\bahao $Z'(U_2)$};

\path (1,0.2886751346*2.4) coordinate (T3);
\draw[thick,->,>=latex] (0,0) -- (T3) node[above]{\bahao $Z'(T_{3}$)};

\draw [dotted] (0,0)--(1.5,0.2886751346*3)node[above]{$\frac{\pi}{6}$};
\draw [dotted] (0,0)--(0.5*1.45,0.2886751346*3*1.45)node[above]{$\frac{2\pi}{6}$};
\draw [dotted] (0,0)--(0,1.5)node[above]{$\frac{3\pi}{6}$};
\draw [dotted] (0,0)--(-0.5*1.45,0.2886751346*3*1.45)node[above]{$\frac{4\pi}{6}$};
\draw [dotted] (0,0)--(-1.5,0.2886751346*3)node[above]{$\frac{5\pi}{6}$};
\draw [dotted] (0.5*1.2,0.2886751346*1.2)--(0.5*1.2,0) node[below]{\qihao $\frac{\sqrt{3}}{5}$};
\end{tikzpicture}
\caption{A central charge of Gepner type $(\tau^2,-\frac{4}{h})$}\label{fig:Gepner2}
\end{figure}
\end{example}

Final remark is that Proposition \ref{prop:Gepner2} gives the minimal value of gldim for species of Dynkin type.
\begin{corollary}
\label{thm:Gepner}
Let $\S$ be a species of Dynkin type. The range of the global dimension $\gldim$ on $\Stab\D(\S)$ is $[1-\frac{2}{h}, +\infty)$. Moreover, $\underline{\sigma}_G$ is the unique minimal point, with value $1-\frac{2}{h}$, of $\gldim$.
\end{corollary}
\begin{proof}
The proof is similar to the proof of \cite[Theorem 4.6]{Q3}, noticing that $\Stab\D(\S)$ is connected and we have Proposition \ref{prop:Gepner2}.
\end{proof}


\end{document}